\numberwithin{equation}{section}
\newtheorem{thm}{Theorem}[section]
\newtheorem{lem}[thm]{Lemma}
\newtheorem{prop}[thm]{Proposition}
\theoremstyle{definition}
\newtheorem{defn}[thm]{Definition}
\newtheorem{qn}[thm]{Question}
\newtheorem{examp}[thm]{Example}
\theoremstyle{remark}
\newtheorem{rem}[thm]{Remark}
\DeclareMathOperator{\tr}{tr}
\DeclareMathOperator{\id}{id}
\DeclareMathOperator{\sgn}{sgn}
\DeclareMathOperator{\ch}{char}
\DeclareMathOperator{\disc}{disc}
\DeclareMathOperator{\inv}{inv}
\newcommand{\ur}{\mathrm{ur}}
\newcommand{\sep}{{\mathrm{sep}}}
\DeclareMathOperator{\Spec}{Spec}
\DeclareMathOperator{\Res}{Res}
\DeclareMathOperator{\Cor}{Cor}
\DeclareMathOperator{\Hom}{Hom}
\DeclareMathOperator{\Coord}{Coord}
\DeclareMathOperator{\Kum}{Kum}
\DeclareMathOperator{\Ind}{Ind}
\DeclareMathOperator{\Hol}{Hol}
\DeclareMathOperator{\Sym}{Sym}
\DeclareMathOperator{\Aut}{Aut}
\DeclareMathOperator{\Gal}{Gal}
\newcommand{\CC}{\mathbb{C}}
\newcommand{\FF}{\mathbb{F}}
\newcommand{\QQ}{\mathbb{Q}}
\newcommand{\ZZ}{\mathbb{Z}}
\newcommand{\A}{\mathcal{A}}
\newcommand{\F}{\mathcal{F}}
\renewcommand{\L}{{\mathrm{L}}}
\newcommand{\R}{{\mathrm{R}}}
\newcommand{\cross}{\times}
\newcommand{\tensor}{\otimes}
\newcommand{\textand}{\quad \text{and} \quad}
\renewcommand{\to}{\mathop{\rightarrow}\limits}
\newcommand{\longto}{\mathop{\longrightarrow}\limits}
\newcommand{\size}[1]{\lvert #1 \rvert}
\newcommand{\isom}{\cong}
\newcommand{\<}{\left\langle}
\renewcommand{\>}{\right\rangle}
\renewcommand{\(}{\left(}
\renewcommand{\)}{\right)}
\newcommand{\laurent}[1]{(\!(#1)\!)}
\newcommand{\ignore}[1]{}
\renewcommand{\epsilon}{\varepsilon}
\title{\'Etale algebras and the Kummer theory of finite Galois modules}
\author{Evan M. O'Dorney}
\begin{document}
  
\maketitle

\begin{abstract}
  Galois cohomology groups $H^i(K,M)$ are widely used in algebraic number theory, in such contexts as Selmer groups of elliptic curves, Brauer groups of fields, class field theory, and Iwasawa theory. The standard construction of these groups involves maps out of the absolute Galois group $G_K$, which in many cases of interest (e.g.\ $K = \mathbb{Q}$) is too large for computation or even for gaining an intuitive grasp. However, for finite $M$, an element of $H^i(K,M)$ can be described by a finite amount of data. For the important case $i = 1$, the appropriate object is an \'etale algebra over $K$ (a finite product of separable field extensions) whose Galois group is a subgroup of the semidirect product $\operatorname{Hol} M = M \rtimes \operatorname{Aut} M$ (often called the \emph{holomorph} of $M$), equipped with a little bit of combinatorial data.
  
  Although the correspondence between $H^1$ and field extensions is in widespread use, it includes some combinatorial and Galois-theoretic details that seem never to have been written down. In this short quasi-expository paper, we fill in this gap and explain how the \'etale algebra perspective illuminates some common uses of $H^1$, including parametrizing cubic and quartic algebras as well as computing the Tate pairing on Galois coclasses of local fields.
\end{abstract}

\section{Introduction}

Galois cohomology groups $H^i(K,M)$ are widely used in algebraic number theory, in such contexts as Selmer groups of elliptic curves, Brauer groups of fields, class field theory, and Iwasawa theory. Many references exist for the subject (\cite{Berhuy_2010,SerreGC}; see also \cite[Part 3]{SerreLF}). The standard construction of these groups involves maps out of the absolute Galois group $G_K$, which in many cases of interest (e.g.\ $K = \QQ$) is too large for computation or even for gaining an intuitive grasp. However, for finite Galois modules $M$, an element of $H^i(K,M)$ can be described by a finite amount of data over $K$. For the important case $i = 1$, the appropriate object is an \'etale algebra over $K$ (a finite product of finite separable field extensions) whose Galois group is a subgroup of the semidirect product $\Hol M = M \rtimes \Aut M$ (often called the \emph{holomorph} of $M$), equipped with a little bit of combinatorial data.

Although the correspondence between $H^1$ and field extensions is in widespread use, it includes some combinatorial and Galois-theoretic details that seem never to have been written down. In this short paper, we fill in this gap and explain how the \'etale algebra perspective illuminates some common uses of $H^1$.

In section \ref{sec:cochains}, we review the usual construction of Galois cohomology via cochains. In section \ref{sec:etale}, we summarize the theory of \'etale algebras, which is covered in greater detail by Milne (\cite{MilneFields}, chapter 8). In section \ref{sec:gcoho}, we explain how \'etale algebras yield explicit descriptions of $H^0(K, M)$ and $H^1(K,M)$. In section \ref{sec:Kummer}, we restrict to the cases $\size{M} \leq 4$ and describe $H^1(K,M)$ even more explicitly via Kummer theory.
Lastly, in section \ref{sec:tate}, we explain how the local Tate pairing, defined formally as a cup product
\[
  \<\bullet,\bullet\> : H^1(K, M) \cross H^1(K, M') \to H^2(K, \mu_m) \isom C_m,
\]
devolves in some of these cases into the local Hilbert symbol, which describes the solubility of norm equations (e.g.\ conics) over a local field.

\subsection{Notation}

If $X$ is a finite set, we denote by $\Sym X$ the group of bijections from $X$ to itself (isomorphic to $S_{\size{X}}$).

We use the semicolon to separate coordinates in a direct product of rings, so for instance, the nontrivial idempotents in $\ZZ \cross \ZZ$ are $(1;0)$ and $(0;1)$.

\section{Galois cohomology: the classical definition}
\label{sec:cochains}

\subsection{Galois groups and Galois modules}
Let $K$ be a field, let $K^\sep$ be the separable closure of $K$, and denote by $G_K = \Gal(K^\sep/K)$ the absolute Galois group. Note that $G_K$ is profinite when equipped with the observable topology, that is, basic opens are left cosets $U = g G_L$ for finite separable extensions $L$ of $K$. Equivalently, a basic open $U$ can be specified by choosing finitely many elements $x_1,\ldots,x_n,y_1,\ldots,y_n \in K^\sep$ and letting
\[
  U = \{g \in G_K : g(x_i) = y_i\}.
\]

A \emph{Galois module} over $K$ is an abelian Hausdorff topological group $M$ equipped with a continuous action of $G_K$, that is, a continuous map $\phi : G_K \to \Aut M$. Galois cohomology is often defined only for \emph{discrete} modules to streamline the proofs \cite{SerreGC,NSW}. The focus of this paper will be on \emph{finite} Galois modules, but see Section \ref{sec:profinite} below.

As group cohomology is a purely algebraic notion, one might be inclined to regard the topological considerations as an added annoyance; but for $G_K$, the topology actually makes things simpler:
\begin{prop}
If $M$ is a finite abelian group, then $K$-Galois module structures on $M$ are in natural bijection with pairs consisting of
\begin{enumerate}[$($a$)$]
  \item a finite Galois extension $L/K$, and
  \item an embedding $\Gal(L/K) \hookrightarrow \Aut M$.
\end{enumerate}
\end{prop}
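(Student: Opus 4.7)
The plan is to unpack the definitions and use the fact that $\Aut M$ is a \emph{finite} (hence discrete) group when $M$ is finite. A Galois module structure on $M$ is, by definition, a continuous homomorphism $\phi : G_K \to \Aut M$, so the assertion reduces to classifying such homomorphisms.

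First I would observe that since $\Aut M$ is finite and discrete, continuity of $\phi$ forces $\ker \phi$ to be an \emph{open} subgroup of $G_K$: the singleton $\{e\} \subseteq \Aut M$ is open, so $\phi^{-1}(e) = \ker \phi$ is open. Because $G_K$ is profinite, every open subgroup has finite index and is automatically closed, and $\ker \phi$ is moreover normal (being the kernel of a homomorphism). By the infinite Galois correspondence sketched in the preceding paragraphs—the open normal subgroups of $G_K$ are exactly the groups of the form $G_L = \Gal(K^\sep/L)$ for $L/K$ a finite Galois extension—we obtain a uniquely determined finite Galois extension $L/K$ with $\ker \phi = G_L$. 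Then $\phi$ factors as
\[
  G_K \twoheadrightarrow G_K/G_L \isom \Gal(L/K) \xrightarrow{\bar\phi} \Aut M,
\]
and $\bar\phi$ is injective by construction, giving the pair $(L, \bar\phi)$.

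For the reverse direction, given $(L, \iota)$ with $L/K$ finite Galois and $\iota : \Gal(L/K) \hookrightarrow \Aut M$ an injection, I would define
\[
  \phi : G_K \twoheadrightarrow \Gal(L/K) \xrightarrow{\iota} \Aut M,
\]
where the first arrow is the restriction map $g \mapsto g|_L$. This is a group homomorphism, and it is continuous because its kernel $G_L$ is a basic open subgroup of $G_K$ in the profinite topology, so preimages of singletons in the discrete target are open.

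Finally I would check these two constructions are mutually inverse: starting from $(L, \iota)$ and building $\phi$ as above, the kernel of $\phi$ is exactly $G_L$ (using injectivity of $\iota$), and the induced map on the quotient is $\iota$ itself, so we recover $(L, \iota)$. Conversely, starting from $\phi$, the extension $L$ cut out by $\ker \phi$ and the induced embedding $\bar\phi$ reassemble to $\phi$ via the composition above. No step is genuinely hard; the only point that deserves care is the uniqueness of $L$, which rests on the bijective correspondence between open normal subgroups of $G_K$ and finite Galois subextensions of $K^\sep/K$—exactly the content of the topological description of $G_K$ given in the preceding subsection.
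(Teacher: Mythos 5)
Your proof is correct and follows essentially the same route as the paper's: both identify $L$ as the fixed field of $\ker\phi$, which is an open (equivalently, closed and finite-index) normal subgroup by continuity, and both observe that composition with the restriction map $G_K \to \Gal(L/K)$ gives the inverse construction. Your version is if anything slightly more direct, since you invoke the correspondence between open normal subgroups and finite Galois extensions immediately rather than first factoring through an auxiliary finite Galois extension $E \supseteq L$ as the paper does.
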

\begin{proof}
If $M$ is a finite $K$-Galois module, then by continuity, the kernel of the structure map $\phi : G_K \to \Aut M$ is a closed subgroup of $G_K$, necessarily of finite index. In particular, $\phi$ factors through $\Gal(E/K)$ for some (finite) Galois extension $E/K$. Let $L$ be the fixed field of the kernel of $\tilde{\phi} : \Gal(E/K) \to \Aut M$, which is necessarily also Galois over $K$, thereby obtaining the desired map. Observe that the allowable fields $E$ are those that are Galois over $K$ and contain $L$, and that $L$ is independent of the choice of $E$, being the fixed field of $\ker \phi$.

Conversely, an embedding $\Gal(L/K) \hookrightarrow \Aut M$ defines a Galois module structure on $M$ by composition with the restriction map $G_K \to \Gal(L/K)$. It is evident that these two constructions are inverse.
\end{proof}

\begin{examp} \label{ex:cubic_Gal_mod}
  Let $M = C_3$ be the smallest group with nontrivial automorphism group: $\Aut M \isom C_2$. Then the nontrivial Galois module structures on $M$ are in natural bijection with separable quadratic extensions $T/K$, as every quadratic \'etale extension $T$ necessarily has a unique involution $x \mapsto \tr_{T/K} x - x$. If $\ch K \neq 2$, these $T$ can be parametrized by Kummer theory as $T = K[\sqrt{D}]$, $D \in K^\cross/\( K^\cross\)^2$. The value $D = 1$ corresponds to the split algebra $T = K \cross K$ and to the module $M$ with trivial action. We have an isomorphism
  \[
  M_T \cong \{0, \sqrt{D}, -\sqrt{D}\}
  \]
  of sets with $G_K$-action, and of Galois modules if the right-hand side is given the appropriate group structure with $0$ as identity.
  
  In particular, the Galois-module structures on $C_3$ form a group $\Hom(G_K, C_2) \isom K^\cross/\(K^\cross\)^2$: the group operation can also be viewed as \emph{tensor product} of one-dimensional $\FF_3$-vector spaces with Galois action.
\end{examp}

\subsection{Profinite Galois modules} \label{sec:profinite}
Another situation of interest is when $M = \varprojlim_{i \in I} M_i$ is a \emph{profinite} group. In favorable cases (such as $M = \ZZ_p^n$, which occurs for instance as the Tate module of an elliptic curve), the maps $\pi_{ji} : M_j \to M_i$ are surjective with kernel a \emph{characteristic} subgroup of $M_j$ (that is, fixed under all automorphisms of $M_j$), thereby inducing a map $\pi_{ji*} : \Aut M_j \to \Aut M_i$. In this case, a Galois module structure on $M$ is a compatible system of continuous maps from $G_K$ to each $\Aut M_i$, which may be given by extensions $L_i/K$ and embeddings $\phi_i : \Gal(L_i/K) \to \Aut M_i$ such that if $j \succ i$, then $L_j \supseteq L_i$ and the diagram
\[
\xymatrix{
  \Gal(L_j/K) \ar[r]^{\phi_j}\ar[d]_{\bullet|_{L_i}} & \Aut M_j\ar[d]^{\pi_{ji*}} \\
  \Gal(L_i/K) \ar[r]_{\phi_i} & \Aut M_i
}
\]
commutes. The cohomology of $M$ can then be computed using continuous cochains. We will not discuss this case further; see \cite[Appendix B]{RubinES} for a fuller treatment.

\subsection{Galois cohomology}
If $M$ is a $K$-Galois module, then for $n \in \ZZ_{\geq 0}$, an \emph{$n$-cochain} of $M$ is a continuous map from $G_K^i$ to $M$. If $M$ is finite, then by continuity, an $n$-cochain factors through $\Gal(L/K)^n$ for some finite Galois extension $L/K$. The $i$-cochains form an abelian group $C^n(K, M)$ under addition of outputs in $M$. We naturally have $C^0(K,M) = M$. We define a \emph{differential,} or \emph{coboundary map,} $\partial = \partial^n : C^n(K, M) \to C^{n+1}(K, M)$ by
\[
  \partial^n(\sigma)\(g_1,\ldots,g_{n+1}\) = g_1 \cdot \sigma(g_2,\ldots,g_{n+1}) +
  \sum_{i=1}^n
  (-1)^i\sigma(g_1,\ldots,g_i g_{i+1},\ldots,g_{n+1})
  +(-1)^{n+1}\sigma(g_1,\ldots,g_n).
\]
For instance,
\begin{align*}
  \partial^0(u)(g) &= g \cdot u - u \\
  \partial^1(\sigma)(g,h) &= g \cdot \sigma(h) - \sigma(gh) + \sigma(g) \\
  \partial^2(\sigma)(g,h,k) &= g \cdot \sigma(h,k) - \sigma(g h, k) + \sigma(g, h k) - \sigma(g, h).
\end{align*}
It is a straightforward computation to show that $\partial \circ \partial = 0$, yielding a chain complex
\[
  0 \longto C^0(K,M) \longto^{\partial^0} C^1(K,M) \longto^{\partial^{-1}} C^2(K,M) \longto \cdots\cdot
\]
We utilize the usual terminology for chain complexes:
\begin{itemize}
  \item The kernel of $\partial^{i}$ is called $Z^i(K,M)$, the group of \emph{$i$-cocycles.}
  \item The image of $\partial^{i-1}$ is called $B^i(K,M)$, the group of \emph{$i$-coboundaries.} We set $B^0(K,M) = 0$.
  \item The quotient $Z^i(K,M)/B^i(K,M)$ is called $H^i(K,M)$, the $i$th \emph{Galois cohomology group} of $M$. Elements of $H^i(K,M)$ are called \emph{cohomology classes} or simply \emph{coclasses.}\footnote{Credit goes to Brandon Alberts for coining this handy contraction.}
\end{itemize}
By definition, a coclass is represented by a cocycle, which is a continuous map from $G_K^i$ to $M$. If $M$ is finite, then the map factors through a finite quotient $\Gal(L/K)^i$. But the field $L$ may grow larger or smaller as the representative cocycle is translated by coboundaries, and there may not be a unique smallest field of definition $L$ for a coclass if $i \geq 2$.

For $i = 0$, we have that $H^0$ is the fixed-points functor:
\[
  H^0(K, M) = M^{G_K} = \{u \in M : g u = u \forall g \in G_K\}.
\]
As is typical of cohomology theories, one of the main benefits of Galois cohomology is that any short exact sequence of $K$-Galois modules
\[
  0 \to L \to M \to N \to 0
\]
yields a long exact sequence
\[
  0 \to H^0(K,L) \to H^0(K,M) \to H^0(K,N)
    \to H^1(K,L) \to H^1(K,M) \to H^1(K,N)
    \to \cdots\cdot
\]

\section{\'Etale algebras and their Galois groups}
\label{sec:etale}

If $K$ is a field, an \emph{\'etale algebra} over $K$ is a finite-dimensional separable commutative algebra over $K$, or equivalently, a finite product of finite separable extension fields of $K$. Typical examples are given by $L = K[x]/(f(x))$, where $f \in K[x]$ is a separable polynomial. A thorough treatment of \'etale algebras is found in Milne (\cite{MilneFields}, chapter 8): here we summarize this theory and prove a few auxiliary results that will be of use.

Fix a ground field $K$, its separable closure $K^\sep$, and let $G_K = \Gal(K^\sep/K)$ be the absolute Galois group. An \'etale algebra $L$ of rank $n$ admits exactly $n$ maps $\iota_1,\ldots, \iota_n$ (of $K$-algebras) to $K^\sep$. We call these the \emph{coordinates} of $L$; the set of them will be called $\Coord_K(L)$ or simply $\Coord(L)$. Together, the coordinates define an embedding of $L$ into $\(K^\sep\)^n$, which we call the \emph{Minkowski embedding} because it subsumes as a special case the embedding of a degree-$n$ number field into $\CC^n$, which plays a major role in algebraic number theory, as in Delone--Faddeev \cite{DF}.

For any element $g$ of the absolute Galois group $G_K$, the composition $g \circ \iota_i$ with any coordinate is also a coordinate $\iota_j$, so we get a homomorphism $\phi = \phi_L : G_K \to \Sym(\Coord(L)) \isom S_n$ defined by
\[
(\phi_g\iota)(x) = g(\iota(x))
\]
for all $x \in L, \iota \in \Coord(L)$. This gives a functor from \'etale $K$-algebras to $G_K$-sets (sets with a $G_K$-action), which is denoted $\F$ in Milne's terminology. A functor going the other way, which Milne calls $\A$, takes $\phi : G_K \to S_n$ to
\begin{equation} \label{eq:Gset_to_etale}
  L = \{ (x_1,\ldots,x_n) \in \(K^\sep\)^n \mid x_{\phi_g(i)} = g(x_i) \, \forall g \in G_K, \forall i \}
\end{equation}
Although it is not immediately obvious, this $L$ is an $n$-dimensional $K$-algebra, Minkowski-embedded into $\(K^\sep\)^n$.

\begin{prop}[\cite{MilneFields}, Theorem 7.29] \label{prop:G_sets}
  The functors $\F$ and $\A$ establish a bijection between
  \begin{itemize}
    \item \'etale extensions $L/K$ of degree $n$, up to isomorphism, and
    \item $G_K$-sets of size $n$ up to isomorphism; that is to say, continuous homomorphisms $\phi : G_K \to S_n$, up to conjugation in $S_n$.
  \end{itemize}
\end{prop}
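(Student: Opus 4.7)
The plan is to prove the bijection by reducing both sides to their ``transitive'' building blocks. On the étale side, any $L$ decomposes uniquely as a product $L = \prod_j L_j$ of finite separable field extensions $L_j/K$; on the $G_K$-set side, any finite $G_K$-set $X$ of size $n$ decomposes as a disjoint union $X = \bigsqcup_j X_j$ of $G_K$-orbits. I would first check that both $\mathcal{F}$ and $\mathcal{A}$ are compatible with these decompositions: the coordinates of $\prod_j L_j$ are the disjoint union of the coordinate sets $\Coord(L_j)$, and the defining equations in (\ref{eq:Gset_to_etale}) split orbit-by-orbit so that $\mathcal{A}(X) = \prod_j \mathcal{A}(X_j)$. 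This reduces the problem to establishing a bijection between \emph{transitive} $G_K$-sets of size $d$ and separable field extensions of degree $d$.

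Next, given a transitive $G_K$-set $X$ of size $d$, pick a basepoint $x_0 \in X$ and set $H = \Stab_{G_K}(x_0)$. Continuity of the action forces $H$ to be an open subgroup of index $d$, and the Galois correspondence (in the infinite-extension form) identifies $H$ with $G_L$ for a unique subfield $L \subseteq K^\sep$ of degree $d$ over $K$; the orbit map $G_K/H \to X$ is a $G_K$-equivariant isomorphism, so $X \cong \Hom_K(L, K^\sep) = \Coord(L) = \mathcal{F}(L)$. For the reverse composition, I would show that $\mathcal{A}(\mathcal{F}(L)) \cong L$ by constructing the map $L \to \mathcal{A}(\mathcal{F}(L))$ sending $a \in L$ to the tuple $(\iota(a))_{\iota \in \Coord(L)}$; this map lands in the Galois-equivariant tuples by definition of the $G_K$-action on $\Coord(L)$, it is clearly a $K$-algebra homomorphism, and it is injective since any nonzero element is nonzero in some coordinate.

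Surjectivity of $L \to \mathcal{A}(\mathcal{F}(L))$ is the crux, and is where I expect the main obstacle to sit. The cleanest way is to argue that a Galois-equivariant tuple $(x_\iota)$ is determined by any one coordinate: if $\iota_0$ is a distinguished coordinate with stabilizer $G_L$, then the equivariance $x_{g \circ \iota_0} = g(x_{\iota_0})$ forces $x_{\iota_0}$ to be $G_L$-fixed, hence $x_{\iota_0} \in (K^\sep)^{G_L} = L$ by the Galois correspondence; once $x_{\iota_0}$ is specified, all other entries $x_{g \circ \iota_0}$ are forced, and they are exactly the coordinates of the element $\iota_0^{-1}(x_{\iota_0}) \in L$ under the Minkowski embedding. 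This simultaneously shows that $\mathcal{A}(X)$ has dimension $d$ over $K$ and is a separable field extension, hence is étale.

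Finally, I would tidy up the topological hypotheses. Continuity of $\phi_L : G_K \to \Sym(\Coord(L))$ follows because the action factors through $\Gal(E/K)$ for any finite Galois $E/K$ containing the Galois closure of every field factor of $L$; conversely, any continuous $\phi : G_K \to S_n$ has open kernel and so corresponds to a $G_K$-set whose orbits have open stabilizers. The well-definedness on isomorphism classes (and the conjugation ambiguity in $S_n$ arising from choosing an ordering of $\Coord(L)$) is then automatic from the construction. Naturality and inverseness of $\mathcal{F}$ and $\mathcal{A}$ follow from the explicit formulas once the bijection is established on objects.
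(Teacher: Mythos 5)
Your argument is correct and is essentially the standard proof (the paper itself defers to Milne, Theorem 7.29, which proceeds the same way): decompose both sides into orbits and field factors, use the infinite Galois correspondence to match transitive $G_K$-sets having open stabilizers of index $d$ with degree-$d$ separable subfields of $K^\sep$, and check via the Minkowski embedding that $L \to \A(\F(L))$ is an isomorphism. The only point worth making fully explicit in your surjectivity step is that transitivity of $G_K$ on $\Coord(L)$ for a field $L$ rests on the isomorphism extension theorem, so that every coordinate is of the form $g \circ \iota_0$ and the forced values $x_{g \circ \iota_0} = g(x_{\iota_0})$ are well defined precisely because $x_{\iota_0}$ lies in the fixed field of $\Stab(\iota_0)$.
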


In this bijection, the \emph{transitive} $G_K$-sets correspond to \'etale algebras that are fields. Moreover, the bijection respects base change, in the following way:

\begin{prop}\label{prop:respects_base_chg}
  Let $K_1/K$ be a field extension, not necessarily algebraic, and let $L/K$ be an \'etale extension of degree $n$. Then $L_1 = L \tensor_{K} K_1$ is \'etale over $K_1$, and the associated Galois permutation representations $\phi_{L/K}$, $\phi_{L_1/K_1}$ are related by the commutative diagram
  \begin{equation}\label{eq:respects_base_chg}
    \xymatrix{
      G_{K_1} \ar[r]^{\bullet |_{K^\sep}} \ar[d]_{\phi_{L_1/K_1}} & G_K \ar[d]^{\phi_{L/K}} \\
      \Sym(\Coord_{K_1}(L_1)) \ar@{-}[r]^{\sim} & \Sym(\Coord_K(L))
    }
  \end{equation}
\end{prop}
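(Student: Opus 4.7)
The plan has three parts: establish étaleness of $L_1$, construct the natural bijection between coordinate sets, and verify commutativity of the diagram.

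For the first part, I would decompose $L = \prod_{j=1}^r F_j$ as a product of finite separable field extensions of $K$. Base change commutes with finite products, so $L_1 = \prod_{j=1}^r (F_j \otimes_K K_1)$. Writing $F_j = K[x]/(f_j(x))$ for a separable polynomial $f_j$, we get $F_j \otimes_K K_1 \cong K_1[x]/(f_j(x))$, which is étale over $K_1$ since $f_j$ remains separable. A product of étale $K_1$-algebras is étale, and clearly $\dim_{K_1} L_1 = \dim_K L = n$.

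For the second part, I would fix an embedding $K^\sep \hookrightarrow K_1^\sep$ (identifying $K^\sep$ with the separable algebraic closure of $K$ inside $K_1^\sep$); this is exactly the unlabelled ``$\sim$'' at the bottom of diagram~\eqref{eq:respects_base_chg}. By the universal property of tensor products, a $K_1$-algebra map $\iota_1 : L_1 = L \otimes_K K_1 \to K_1^\sep$ is the same datum as a $K$-algebra map $\iota : L \to K_1^\sep$ (taking $\iota_1(x \otimes c) = c \cdot \iota(x)$). Because $L/K$ is étale, every element of $L$ is separable algebraic over $K$, so the image of any such $\iota$ lies in the chosen copy of $K^\sep$. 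Conversely, any $\iota \in \Coord_K(L)$ extends uniquely to a $K_1$-algebra map $\iota_1 : L_1 \to K_1^\sep$. This gives a canonical bijection $\Coord_{K_1}(L_1) \longleftrightarrow \Coord_K(L)$, hence the indicated isomorphism of symmetric groups.

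For the third part, I would take $g \in G_{K_1}$ and a coordinate $\iota_1 \in \Coord_{K_1}(L_1)$ corresponding to $\iota \in \Coord_K(L)$, and compute both sides directly. By definition, $\phi_{L_1/K_1}(g)(\iota_1) = g \circ \iota_1$. Restricting to $L \subset L_1$ and using $\iota_1|_L = \iota$ (with image in $K^\sep$), this becomes the map $x \mapsto g(\iota(x)) = (g|_{K^\sep})(\iota(x))$, which under the bijection is exactly $\phi_{L/K}(g|_{K^\sep})(\iota)$. Thus the diagram commutes.

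The main subtlety, and the only potential obstacle, is that the restriction $\bullet|_{K^\sep} : G_{K_1} \to G_K$ must be well-defined, i.e.\ that $g \in G_{K_1}$ preserves our chosen copy of $K^\sep$ inside $K_1^\sep$. This holds because $g$ fixes $K_1 \supseteq K$ pointwise and $K^\sep$ is characterized as the set of elements of $K_1^\sep$ that are separable algebraic over $K$, a property preserved by any $K$-automorphism of $K_1^\sep$. Once this point is settled, the remainder of the argument is bookkeeping with the universal property of $\otimes_K$.
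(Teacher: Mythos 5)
Your proof is correct and follows essentially the same route as the paper: both rest on the natural bijection $\Coord_{K_1}(L_1) \leftrightarrow \Coord_K(L)$ given by restricting a coordinate of $L_1$ to $L$, together with the observation that the restriction $G_{K_1} \to G_K$ intertwines the two actions. The only cosmetic difference is that you exhibit the inverse explicitly via the universal property of $\otimes_K$, whereas the paper deduces surjectivity of the restriction map from injectivity plus the equality of cardinalities.
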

\begin{proof}
That $L_1/K_1$ is \'etale is standard (see Milne \cite{MilneFields}, Prop.~8.10). For the second claim, consider the natural restriction map $r : \Coord_{K_1}(L_1) \to \Coord_K(L)$. It is injective, since a $K_1$-linear map out of $L_1$ is determined by its values on $L$; and since both sets have the same size, $r$ is surjective and is hence an isomorphism of $G_{K_1}$-sets (the $G_{K_1}$-structure on $\Coord_{K}(L)$ arising by restriction from the $ G_{K} $-structure).
\end{proof}

\subsection{The Galois group of an \'etale algebra}

Define the \emph{Galois group} $G(L/K)$ of an \'etale algebra to be the image of its associated Galois representation $\phi : G_K \to \Sym(\Coord(L)) \isom S_n$. It transitively permutes the coordinates corresponding to each field factor. If $L$ is a field, then $G(L/K)$ is the Galois group of its normal closure, equipped with an embedding into $S_n$. For example, if $L$ is a quartic field, then $G(L/K)$ is one of the five (up to conjugacy) transitive subgroups of $S_4$, which (to use the traditional names) are $S_4$, $A_4$, $D_4$, $V_4$, and $C_4$. Galois groups in this sense are used in the tables of cubic and quartic fields in Delone-Faddeev \cite{DF} and the Number Field Database \cite{NFDB}. Note that the Galois group $G(L/K)$ is defined whether or not $L$ is a Galois extension. If it is, then the Galois group is \emph{simply} transitive and coincides with the Galois group in the sense of Galois theory.

The following notion will be useful for us.
\begin{defn}
  Let $G \subseteq S_n$ be a subgroup. A \emph{$G$-extension} of $K$ is a degree-$n$ \'etale algebra $L$ with a choice of subgroup $G' \subseteq \Sym(\Coord(L))$ that is conjugate to $G$ (in $\Sym(\Coord(L))$) and contains $G(L/K)$, plus a $G$-conjugacy class of isomorphisms $G' \cong G$. The added data is called a \emph{$G$-structure} on $L$.
\end{defn}
\begin{prop} \label{prop:G_sets_G_strucs}
  Let $G \subseteq S_n$ be a subgroup. $G$-extensions $L/K$ up to isomorphism are in bijection with homomorphisms $\phi : G_K \to G$, up to conjugation in $G$.
\end{prop}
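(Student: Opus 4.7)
The plan is to refine the bijection of Proposition \ref{prop:G_sets} by tracking the additional $G$-structure data, constructing mutually inverse maps between isomorphism classes of $G$-extensions and $G$-conjugacy classes of continuous homomorphisms $\phi : G_K \to G$. For the forward direction, I would take $\phi : G_K \to G$, compose with the inclusion $G \hookrightarrow S_n$ to get $\tilde\phi : G_K \to S_n$, and apply the functor $\A$ of Proposition \ref{prop:G_sets} to obtain an \'etale algebra $L$ together with a canonical identification $\Coord(L) = \{1, \ldots, n\}$, so that $\Sym(\Coord(L)) = S_n$ canonically. Since the image of $\tilde\phi$ lies in $G$, we have $G(L/K) \subseteq G$; I would take $G' = G$ and equip $L$ with the $G$-structure given by the $G$-conjugacy class of the identity isomorphism $G' \cong G$. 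To confirm well-definedness, note that replacing $\phi$ by the $G$-conjugate $c_g \circ \phi \circ c_{g^{-1}}$ amounts to conjugating $\tilde\phi$ by $g \in G \subseteq S_n$; this induces an isomorphism of the resulting \'etale algebras sending $G' = G$ to $g G g^{-1} = G$ and sending $\id : G' \to G$ to the inner automorphism $c_g$, which is $G$-conjugate to $\id$.

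For the backward direction, given a $G$-extension $(L, G', [\psi])$, I would first choose a bijection $\Coord(L) \cong \{1, \ldots, n\}$ to fix a representative $\phi_L : G_K \to \Sym(\Coord(L)) = S_n$ coming from the functor $\F$. Since the image of $\phi_L$ equals $G(L/K) \subseteq G'$, composing with a representative $\psi : G' \to G$ yields $\phi := \psi \circ \phi_L : G_K \to G$. Well-definedness up to $G$-conjugation then has three pieces to check: altering the representative of $[\psi]$ by an inner automorphism $c_g$ of $G$ changes $\phi$ to its $G$-conjugate; changing the coordinate bijection by $h \in S_n$ conjugates both $\phi_L$ and $G'$ by $h$ and, upon transporting $\psi$ to $\psi \circ c_{h^{-1}}$, leaves $\phi$ unchanged; and any isomorphism of $G$-extensions decomposes into such a change together with a modification of $[\psi]$. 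Verifying that the two constructions are mutually inverse is then a direct unwinding of definitions: starting from $\phi$ produces $(L, G, [\id])$, whose backward image is $\id \circ \phi_L = \phi$, and the reverse composition is analogous.

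The main obstacle I anticipate is this conjugation bookkeeping---showing that the ambiguity ``$G$-conjugacy class of $\psi$'' on the $G$-extension side corresponds precisely to $G$-conjugation of $\phi$, and not to some larger normalizer-conjugation class. Concretely, if $\phi_1, \phi_2 : G_K \to G$ yield isomorphic $G$-extensions realized by an $S_n$-conjugating element $h$, then preservation of the $G$-structure forces $h \in N_{S_n}(G)$ and further forces the induced automorphism $c_h|_G$ to be inner, i.e., $h \in G \cdot C_{S_n}(G)$. Writing $h = g z$ with $g \in G$ and $z$ centralizing $G$, the centralizer $z$ commutes with the images of the $\phi_i$, so one recovers $\phi_2 = c_g \phi_1 c_{g^{-1}}$ for a genuine $g \in G$, as needed.
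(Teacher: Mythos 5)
Your proposal is correct and follows essentially the same route as the paper: refine the $\F$/$\A$ correspondence of Proposition \ref{prop:G_sets} by carrying along the subgroup $G'$ and the $G$-conjugacy class of isomorphisms $G' \cong G$, and check the two constructions are mutually inverse. Your final paragraph on the conjugation bookkeeping (that an $S_n$-relabeling preserving the $G$-structure must lie in $N_{S_n}(G)$, induce an inner automorphism of $G$, hence factor as $g z$ with $z \in C_{S_n}(G)$ acting trivially on the image of $\phi$) is a correct and welcome elaboration of a point the paper compresses into the single sentence ``the two constructions are inverse up to relabeling the coordinates.''
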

\begin{proof}
  By Proposition \ref{prop:G_sets}, an \'etale algebra $L$ corresponds to a map $\phi : G_K \to S_n$. The added data of a $G$-extension furnishes an group $G' \hookrightarrow S_n$ containing the image of $\phi$, with a $G$-conjugacy class of isomorphisms $\theta : G' \cong G$. This gives a map $\theta \circ \phi : G_K \to G$, unique up to conjugation in $G$. Conversely, given a map $\phi : G_K \to G$, we compose with the natural embedding $G \subseteq S_n$ to get an \'etale algebra $L$ which we equip with the subgroup $G$ and the identity map $\id : G \isom G$. The two constructions are inverse up to relabeling the coordinates of $L$, that is, conjugation in $S_n$.
\end{proof}

\begin{examp}\label{ex:C4-structure}
$L = \QQ(\zeta_5)$ is a $C_4$-extension (taking $C_4 = \<(1234)\> \subseteq S_4$), indeed its Galois group is isomorphic to $C_4$; and $L$ admits two distinct $C_4$-structures, as there are two ways to identify $C_4$ with its image in $S_4$, which are conjugate in $S_4$ but not in $C_4$. Likewise, $L = \QQ \cross \QQ \cross \QQ \cross \QQ$ admits six $C_4$-structures, one for each embedding of $C_4$ into $S_4$, as $G(L/\QQ)$ is trivial.
\end{examp}

\subsection{Resolvents}
The solution of a general quartic equation by radicals inevitably passes through a \emph{resolvent cubic} which must be solved first. Similarly, whether a quintic equation is solvable can be told from its sextic resolvent. The \'etale algebras generated by the roots of these resolvents are instances of the following definition.

\begin{defn}
Let $G \subseteq S_n$, $H \subseteq S_m$ be subgroups and $\rho : G \to H$ be a homomorphism. Then for every $G$-extension $L/K$, the corresponding $\phi_L : G_K \to G$ may be composed with $\rho$ to yield a map $\phi_R : G_K \to H$, which defines an $H$-extension $R/K$ of degree $m$. This $R$ is called the \emph{resolvent} of $L$ under the map $\rho$.
\end{defn}

\begin{examp}\label{ex:rsv43}
Since there is a surjective map $\rho_{4,3} : S_4 \to S_3$, every quartic \'etale algebra $L/K$ has a cubic resolvent $R$, generated by a formal root of the \emph{resolvent cubic} that appears when a general quartic equation is to be solved by radicals. There is also an analogue of this construction over $\ZZ$; see Bhargava \cite{B3}.
\end{examp}

\begin{examp} \label{ex:rsv_n2}
Likewise, the sign map can be viewed as a homomorphism $\sgn : S_n \to S_2$, attaching to every \'etale algebra $L$ a quadratic resolvent $T$. If $L = K[\theta]/f(\theta)$ is generated by a polynomial $f$, and if $\ch K \neq 2$, then it is not hard to see that $T = K[\sqrt{\disc f}]$ where $\disc f$ is the polynomial discriminant. Note that $T$ still exists even if $\ch K = 2$. We have that $T \cong K \cross K$ is split if and only if the Galois group $G(L/K)$ is contained in the alternating group $A_n$.
\end{examp}

\begin{examp}\label{ex:D4}
The dihedral group $D_4$ of order $8$ has an outer automorphism, because rotating a square in the plane by $45^\circ$ does not preserve the square but does preserve every symmetry of the square. This map $\rho : D_4 \to D_4$ associates to each $D_4$-algebra $L$ a new $D_4$-algebra $L'$, not in general isomorphic. This is the classical phenomenon of the \emph{mirror field}. For instance, if $L = \QQ[\sqrt{3 + \sqrt{2}}]$, then
\[
  L' = \QQ\left[\sqrt{3 + \sqrt{2}} + \sqrt{3 - \sqrt{2}}\right] = \QQ\left[\sqrt{6 + 2\sqrt{7}}\right].
\]
Both $L$ and $L'$ have the same Galois closure, the $D_4$-octic extension $\tilde{L} = \QQ\left[\sqrt{3 + \sqrt{2}}, \sqrt{7}\right]$.
\end{examp}

\begin{examp}
Likewise, the outer automorphism $\phi$ of $S_6$ permits the association to each sextic \'etale algebra $L/K$ a mirror sextic \'etale algebra $L'$. Because $\phi$ sends any transposition to a product of three disjoint transpositions, we have, for $Q/K$ quadratic, the mirror algebra association
\[
  Q \cross K \cross K \cross K \cross K \quad
  \longleftrightarrow
  \quad Q \cross Q \cross Q.
\]
\end{examp}

\begin{examp} \label{ex:Cayley}
The \emph{Cayley embedding} is an embedding of any group $G$ into $\Sym G$, acting by left multiplication. The Cayley embedding $\rho : S_n \hookrightarrow S_{n!}$ attaches to every \'etale algebra $L$ of degree $n$ an algebra $\tilde L$ of degree $n!$ (with an $S_n$-action, as we shall soon see). This is none other than the \emph{$S_n$-closure} of $L$, constructed by Bhargava in a quite different way in \cite[Section 2]{B3}.

More generally, for any $G \subseteq S_n$, the Cayley embedding $G \hookrightarrow \Sym G$ allows one to associate to each $G$-extension $L$ a degree-$|G|$ extension $T$, which we may call the \emph{$G$-closure} of $L$. The name ``closure'' is justified by the following observation: if $G \subseteq S_n$ is a transitive subgroup, then, since any transitive $G$-set is a quotient of the simply transitive one, we can embed $L$ into $T$ by Proposition \ref{prop:sub} below. More generally, $G$-closures of ring extensions, not necessarily \'etale or even reduced, have been constructed and studied by Biesel \cite{Biesel_thesis, Biesel}.
\end{examp}

If $\rho : G \to H$ is invertible, as in many of the above examples, then the map from $G$-extensions to $H$-extensions is also invertible: we say that the two extensions are \emph{mutual resolvents}.

\subsection{Subextensions and automorphisms}

The Galois group holds the answers to various natural questions about an \'etale algebra. The next two propositions are given without proof, since they follow immediately from the functorial character of the correspondence in Proposition \ref{prop:G_sets}.
\begin{prop} \label{prop:sub}
The subextensions $L' \subseteq L$ of an \'etale extension $L/K$ correspond to the equivalence relations $\sim$ on $\Coord(L)$ stable under permutation by $G(L/K)$, under the bijection
\[
  \mathord{\sim} \mapsto L' = \{ x \in L : \iota(x) = \iota'(x) \text{ whenever } \iota \sim \iota' \}.
\]
\end{prop}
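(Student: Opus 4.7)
The plan is to deduce the statement directly from the anti-equivalence of categories in Proposition \ref{prop:G_sets}, by translating ``subextensions of $L$'' into the language of $G_K$-sets.

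First, I would observe that an inclusion $L' \hookrightarrow L$ of \'etale $K$-algebras corresponds under the functor $\F$ to a $G_K$-equivariant \emph{surjection} $r : \Coord(L) \twoheadrightarrow \Coord(L')$: every $K$-embedding $\iota : L \to K^\sep$ restricts to a $K$-embedding $\iota|_{L'}$, and this restriction map is surjective because any embedding of the subalgebra $L'$ extends to an embedding of $L$ (e.g.\ factor through $K^\sep \tensor_K L$ and split into coordinates). Conversely, any $G_K$-equivariant surjection of finite $G_K$-sets arises this way, since applying $\A$ to a surjection $\Coord(L) \twoheadrightarrow C'$ yields an \'etale algebra $L'$ together with an inclusion into $L$ via the Minkowski descriptions $L = \A(\Coord(L))$ and $L' = \A(C')$.

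Second, I would note that surjections out of $\Coord(L)$ in the category of $G_K$-sets, up to isomorphism over $\Coord(L)$, are in bijection with $G_K$-stable equivalence relations $\sim$ on $\Coord(L)$, via $r \mapsto (\iota \sim \iota' \iff r(\iota) = r(\iota'))$. Because the action of $G_K$ on $\Coord(L)$ factors through its image $G(L/K)$, being $G_K$-stable is the same as being $G(L/K)$-stable. Combined with the previous step, this already establishes the bijection between subextensions $L' \subseteq L$ and $G(L/K)$-stable equivalence relations on $\Coord(L)$.

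Third, I would check that the stated formula for $L'$ is the correct one. Using the Minkowski embedding $L \hookrightarrow (K^\sep)^{\Coord(L)}$ sending $x \in L$ to the tuple $(\iota(x))_{\iota}$, the image of the inclusion $\A(C') = L' \hookrightarrow L$ consists precisely of the tuples $(x_\iota)_\iota$ whose entries factor through the quotient map $\Coord(L) \to C' = \Coord(L)/\sim$, i.e.\ satisfy $x_\iota = x_{\iota'}$ whenever $\iota \sim \iota'$. Re-encoding this as a condition on elements of $L$ gives exactly $L' = \{x \in L : \iota(x) = \iota'(x) \text{ whenever } \iota \sim \iota'\}$.

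There is no real obstacle here: the content of the proposition is entirely carried by Proposition \ref{prop:G_sets}, and the only work is to unwind the Minkowski description to confirm that the advertised formula matches the subalgebra produced functorially. The mildly subtle point to get right is the direction of variance, namely that an inclusion of algebras corresponds to a surjection, not an injection, of $G_K$-sets.
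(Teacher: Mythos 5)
Your proof is correct and is essentially the paper's intended argument: the paper omits the proof entirely, stating only that the result ``follows immediately from the functorial character of the correspondence in Proposition \ref{prop:G_sets},'' and your write-up is precisely the expansion of that remark (inclusions of algebras $\leftrightarrow$ surjections of $G_K$-sets $\leftrightarrow$ stable equivalence relations, with the formula checked via the Minkowski embedding). You correctly flag the one subtle point, the contravariance of the correspondence.
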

\begin{examp}\label{ex:wreath}
  If an \'etale extension $M/K$ of degree $tb$ has a subextension $L/K$ of degree $b$, with $M$ of constant degree $t$ over each field factor of $L$, then $G(L/K)$ must be contained in the wreath product $S_t \wr S_b$, which is the group of permutations of a $tb$-element set preserving a partition into $b$ subsets of order $t$. This fact is well known for field extensions. %  (see Widmer \cite{Widmer11}).
\end{examp}
\begin{rem}
Note that if $L$ is a Galois field extension, the image of $\phi_L$ is a simply transitive subgroup $\Gamma$, and identifying $\Coord(L)$ with $\Gamma$, the stable equivalence relations are just right congruences modulo subgroups of $\Gamma$: so we recover the Galois correspondence between subgroups and subfields.
\end{rem}

The Galois group $G(L/K)$ is \emph{not} in general a group of automorphisms of $L$. However, the automorphisms of $L$ as a $K$-algebra can be described in terms of the Galois group readily.

\begin{prop}\label{prop:aut}
Let $L$ be Minkowski-embedded by its coordinates $\iota_1,\ldots,\iota_n$. Then the automorphism group $\Aut(L/K)$ is given by permutations of coordinates,
\[
  \tau_\pi(x_1;\ldots;x_n) = (x_{\pi^{-1}(1)} ; \ldots ; x_{\pi^{-1}(n)}),
\]
for $\pi$ in the centralizer $C(S_n, G(L/K))$ of the Galois group.
\end{prop}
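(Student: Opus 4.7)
The plan is to deduce the statement from the equivalence of categories given by the functor $\F$ in Proposition \ref{prop:G_sets}, followed by an unwinding of the action on Minkowski coordinates. Under this equivalence, $\Aut_K(L)$ is identified with $\Aut_{G_K}(\Coord(L))$, the group of permutations of the $n$-element set $\Coord(L)$ commuting with the $G_K$-action. Since the action factors through its image $G(L/K) \subseteq \Sym(\Coord(L)) \isom S_n$, such a $\pi$ is exactly one that commutes with every element of $G(L/K)$, hence an element of the centralizer $C(S_n, G(L/K))$. This gives the claimed description of $\Aut(L/K)$ as a subset of $S_n$.

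Next I would compute the explicit action on $L$. The functor $\F$ sends $\sigma \in \Aut_K(L)$ to the coordinate permutation $\iota \mapsto \iota \circ \sigma^{-1}$, the inverse being inserted so that $\sigma \mapsto \pi$ is a group homomorphism rather than an anti-homomorphism. Writing $\iota_i \circ \sigma^{-1} = \iota_{\pi(i)}$, equivalently $\iota_i \circ \sigma = \iota_{\pi^{-1}(i)}$, we get for every $x \in L$
\[
  (\sigma(x))_i \;=\; \iota_i(\sigma(x)) \;=\; \iota_{\pi^{-1}(i)}(x) \;=\; x_{\pi^{-1}(i)},
\]
which is exactly the formula for $\tau_\pi$ in the statement.

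The only delicate point is the bookkeeping around conventions — deciding whether $\pi$ or $\pi^{-1}$ appears in the displayed formula — and this is forced by the requirement that $\sigma \mapsto \pi$ be a genuine homomorphism, since precomposition $\iota \mapsto \iota \circ \sigma$ naturally yields an anti-homomorphism. Beyond that, the proposition follows by straightforward unwinding of definitions from Proposition \ref{prop:G_sets}.
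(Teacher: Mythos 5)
Your proof is correct and is exactly the argument the paper intends: the paper states this proposition without proof, remarking only that it ``follows immediately from the functorial character of the correspondence in Proposition \ref{prop:G_sets},'' and your write-up is precisely the unwinding of that — full faithfulness of $\F$ identifies $\Aut_K(L)$ with the $G_K$-equivariant permutations of $\Coord(L)$, i.e.\ the centralizer of $G(L/K)$, and the $\pi^{-1}$ in the displayed formula is correctly forced by turning the precomposition anti-homomorphism into a homomorphism. No gaps.
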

% (For $H \subseteq G$ groups, the \emph{centralizer} $C(G,H)$ of $H$ in $G$ is the subgroup of elements of $G$ that commute with every element of $H$.)

This provides a characterization, in terms of the Galois group, of rings having various kinds of automorphisms.

\begin{examp}
  Since $S_2$ is abelian, any \'etale algebra $L$ of rank $2$ has a unique non-identity automorphism, the conjugation $\bar x = \tr_{L/K} x - x$.
\end{examp} 
\begin{examp}
  If $L$ has rank $4$, automorphisms $\tau$ of $L$ of order $2$ whose fixed algebra is of rank $2$ are in bijection with $D_4$-structures on $L$. Indeed, the corresponding permutation $\pi$ must have just two orbits and is therefore conjugate to $(13)(24)$, and the centralizer of this permutation is $D_4$.
\end{examp}

\subsection{Torsors}

If an \'etale algebra is a generalization of a field extension, it is desirable to have a suitable analogue of a \emph{Galois} extension. Here is a workable definition.

\begin{defn}
Let $G$ be a finite group. A \emph{$G$-torsor} over $K$ is an \'etale algebra $L$ over $K$ equipped with an action of $G$ by automorphisms $\{\tau_g\}_{g \in G}$ that permute the coordinates simply transitively, that is, such that $L \tensor_K K^\sep$ is isomorphic to
\[
  \bigoplus_{g \in G} K^\sep
\]
with $G$ acting by right multiplication on the indices.
\end{defn}

\begin{rem}
The usage of the word ``torsor'' in this context is not quite standard; one says instead that $\Spec L$ is a $G$-torsor as a finite flat scheme, but we will not use the scheme-theoretic viewpoint here.
\end{rem}

\begin{prop}
\label{prop:torsor}
Let $G$ be a group of order $n$. An \'etale algebra $L$ is a $G$-torsor if and only if it is a $G$-extension, where $G$ is embedded into $S_n$ by the Cayley embedding ($G$ acting on itself by left multiplication). Moreover, there is a bijection between
\begin{itemize}
  \item $G$-torsor structures on $L$, up to conjugation in $G$, and
  \item $G$-structures on $L$.
\end{itemize}
The bijection is given in the following way: there is a labeling $\{\iota_g\}$ of the coordinates of $L$ with the elements of $G$ such that the Galois action is by left multiplication
\begin{equation} \label{eq:torsor_left}
  g(\iota_h(x)) = \iota_{g h}(x)
\end{equation}
while the torsor action is by right multiplication
\begin{equation} \label{eq:torsor_right}
  \iota_g(\tau_{h}(x)) = \iota_{g h}(x).
\end{equation}
\end{prop}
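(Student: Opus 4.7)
The plan is to identify both sides with combinatorial data attached to $\Coord(L)$, exploiting the fact that under the Cayley embedding the subgroups $L(G) \subseteq \Sym G$ of left multiplications and $R(G) \subseteq \Sym G$ of right multiplications commute pointwise, each acts simply transitively on $G$, and each is the centralizer of the other in $\Sym G$. Given these observations, Propositions~\ref{prop:G_sets_G_strucs} and \ref{prop:aut} translate the two notions into essentially the same picture.

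For the direction from torsor to $G$-extension, I would first apply Proposition~\ref{prop:aut} to identify the torsor action $\{\tau_g\}_{g \in G}$ with a homomorphism $G \hookrightarrow C_{\Sym(\Coord L)}(G(L/K))$ whose image acts simply transitively on $\Coord(L)$. Choosing a base coordinate $\iota_e$ then yields a labeling $\iota_g$ of $\Coord(L)$ by $G$ under which the torsor action becomes right multiplication, giving equation~\eqref{eq:torsor_right}. Since $G(L/K)$ centralizes this simply transitive image of $G$, and since $C_{\Sym G}(R(G)) = L(G)$, the Galois action must take values in $L(G)$, yielding equation~\eqref{eq:torsor_left} and thereby a $G$-structure on $L$ via Cayley. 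The reverse implication runs the same argument in the opposite direction: given a $G$-extension via the Cayley embedding, $\Coord(L)$ identifies canonically with $G$ so that $G_K$ acts through $L(G)$; each element $r \in R(G)$ centralizes $L(G) \supseteq \phi(G_K)$ and hence lifts to an automorphism of $L$ by Proposition~\ref{prop:aut}, and right multiplication gives a simply transitive $G$-action, making $L$ a $G$-torsor.

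For the bijection, I would verify that these two constructions invert each other up to the appropriate equivalences. Replacing the base coordinate $\iota_e$ by $\tau_h(\iota_e)$ relabels the coordinates by a right translation, which conjugates the resulting homomorphism $\phi : G_K \to G$ by $h$; this matches exactly the $G$-conjugacy ambiguity built into the definition of a $G$-structure. Conversely, replacing the torsor action $\tau_g$ by $\tau_{hgh^{-1}}$ induces the same relabeling, so conjugate torsor structures correspond to the same $G$-structure. The main obstacle is purely bookkeeping: making sure all left/right/inverse conventions are consistent so that the simply transitive torsor action lands as \emph{right} multiplication in~\eqref{eq:torsor_right} while the Galois action lands as \emph{left} multiplication in~\eqref{eq:torsor_left}; once the dictionary is fixed, the rest follows from the elementary mutual-centralizer relation between $L(G)$ and $R(G)$ in $\Sym G$.
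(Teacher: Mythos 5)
Your proposal is correct and follows essentially the same route as the paper: both arguments rest on the fact that the left- and right-regular images of $G$ in $\Sym G$ are each other's centralizers, combined with Proposition~\ref{prop:aut} to convert torsor actions into simply transitive centralizing subgroups of $\Sym(\Coord(L))$ and then pass to $G$-structures. Your version merely spells out the base-point labeling and the conjugacy bookkeeping a bit more explicitly than the paper does.
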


\begin{proof}
Denote by $G_L$, $G_R$ the embedded images of $G$ in $\Sym G$ given by left and right multiplication, respectively. It is well known that $G_\L$ and $G_\R$ are centralizers of each other \cite[Theorem 6.3.1]{HallGroups}. For completeness, we include the short proof: if $\pi \in C(G_\L)$, then for all $g \in G$,
\[
  \pi(g) = \pi(g \cdot 1_G) = g \cdot \pi(1_G),
\]
so $\pi$ is right multiplication by $\pi(1_G)$, and conversely. Note that $G_\L$ and $G_\R$ are conjugate under the inversion permutation
\begin{align*}
  \epsilon &\in \Sym G \\
  \epsilon(g) &= g^{-1}.
\end{align*}

By Proposition \ref{prop:aut}, $G$-torsor structures on $L$, up to $G$-conjugacy, are in bijection with conjugates $G'$ of $G_\L$ in $\Sym(\Coord(L))$ that commute with $G(L/K)$. By passing to the centralizer, these are in bijection with conjugates $G''$ of $G_\R$ in $\Sym(\Coord(L))$ that \emph{contain} $G(L/K)$, and these are $G$-structures on $L$ by definition.
% The labeling $\{\iota_g\}_{g \in G}$ of the coordinates by the elements of $G$ 
% It is then clear that conjugates $G'$ of $G$ in $\Sym(\Coord(L))$ that \emph{contain} $G(L/K)$ are in bijection with conjugates $G''$ that \emph{commute} with $G(L/K)$. This establishes the first assertion. For the bijection of structures, if an embedding $G \cong G' \subseteq \Sym(\Coord(L))$ is given, then we can label the coordinates with elements of $G$ so that $G$ acts on them by multiplication; then $G''$ gets identified with $G$ by the corresponding right action. The only ambiguity is in which embedding is labeled with the identity element; if this is changed, one computes that the resulting identification of $G''$ with $G$ is merely conjugated, so we get a well-defined map from $G$-torsor structures to $G$-structures. The reverse map is constructed in exactly the same way.
\end{proof}

Here is another perspective on torsors.
\begin{prop} \label{prop:fld_fac}
$G$-torsors over a field $K$, up to isomorphism, are determined by their field factor, a Galois extension $L_1/K$ equipped with an embedding $\Gal(L/K) \hookrightarrow G$ up to conjugation in $G$.
\end{prop}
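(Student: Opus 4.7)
The plan is to combine the two preceding correspondences. By Proposition \ref{prop:torsor}, $G$-torsor structures on an \'etale algebra $L$ (up to $G$-conjugacy) are the same data as $G$-structures on $L$, where $G$ is embedded in $S_n$ via the Cayley embedding; by Proposition \ref{prop:G_sets_G_strucs}, the latter are in bijection with continuous homomorphisms $\phi : G_K \to G$ up to conjugation in $G$. It therefore suffices to match such conjugacy classes of maps $\phi$ with pairs $(L_1/K, \bar\iota)$ consisting of a finite Galois extension $L_1$ of $K$ and an embedding $\bar\iota : \Gal(L_1/K) \hookrightarrow G$, taken up to $G$-conjugation.

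In one direction, starting from $\phi$, I would set $H = \phi(G_K)$, a finite subgroup of $G$ (finite by continuity of $\phi$ on the profinite group $G_K$), and let $L_1 = (K^\sep)^{\ker \phi}$. Then $L_1/K$ is a finite Galois extension with $\Gal(L_1/K) \cong G_K/\ker \phi \cong H$, and $\phi$ factors canonically as $G_K \twoheadrightarrow \Gal(L_1/K) \xrightarrow{\bar\iota} G$ for an embedding $\bar\iota$. Conjugating $\phi$ by $g \in G$ preserves $\ker \phi$ (hence $L_1$) and conjugates $\bar\iota$ by $g$, so the assignment descends to conjugacy classes. Conversely, given $(L_1, \bar\iota)$, composing $\bar\iota$ with the restriction map $G_K \twoheadrightarrow \Gal(L_1/K)$ recovers $\phi$, and these two constructions are manifestly inverse.

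It remains to check that the $L_1$ so constructed really coincides (up to isomorphism) with the field factor of the $G$-torsor $L$. By Proposition \ref{prop:torsor}, after labeling the coordinates $\{\iota_h\}_{h \in G}$, the Galois action takes the form $g \cdot \iota_h = \iota_{\phi(g) h}$. Hence the $G_K$-orbits on $\Coord(L)$ are exactly the $H$-orbits on $G$ under left multiplication, i.e., the right cosets in $H \backslash G$. Each such orbit is isomorphic as an $H$-set (hence as a $G_K$-set via $\phi$) to $H$ acting on itself by left multiplication, so under the functor $\A$ of Proposition \ref{prop:G_sets} it corresponds to a Galois field extension of $K$ with group $H$, namely $L_1$. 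All field factors of $L$ are therefore isomorphic to this single $L_1$.

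The only subtlety is keeping the left/right multiplication conventions straight when passing between the Cayley embedding, the Galois action, and the torsor action; once that bookkeeping is in place, as already handled in the proof of Proposition \ref{prop:torsor}, everything else is immediate.
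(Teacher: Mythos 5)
Your proof is correct, but it takes a genuinely different route from the paper's. You reduce everything to the two preceding correspondences: Proposition \ref{prop:torsor} converts a torsor structure into a $G$-structure, Proposition \ref{prop:G_sets_G_strucs} converts that into a continuous homomorphism $\phi : G_K \to G$ up to $G$-conjugacy, and then you invoke the standard factorization $G_K \twoheadrightarrow \Gal(L_1/K) \xrightarrow{\bar\iota} G$ through the fixed field of $\ker\phi$; your final paragraph correctly identifies this $L_1$ with the field factor by computing the $G_K$-orbits on $\Coord(L)$ as right cosets $Hg$, each a simply transitive $G_K$-set with stabilizer $\ker\phi$. The paper instead argues directly on the torsor: it extracts $H$ as the subgroup of \emph{torsor} automorphisms preserving one field factor (so the embedding $\Gal(L_1/K) \hookrightarrow G$ arises from the right-multiplication action rather than from the image of $\phi$), and for the converse it explicitly rebuilds the $G$-action on $L_1 \times \cdots \times L_r$ from coset representatives of $G/H$, verifying $(gg')|_T = g|_T \circ g'|_T$ by hand. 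Your argument is shorter and cleaner given the machinery already in place; the paper's is self-contained at this point and yields an explicit recipe for reassembling the torsor from its field factor. The one point worth making explicit in your version is the left/right bookkeeping you allude to at the end: the subgroup $H = \phi(G_K)$ you obtain from the Galois (left-multiplication) action coincides with the stabilizer of the field factor under the torsor (right-multiplication) action precisely because $\tau_h$ preserves the orbit $H \cdot 1$ iff $Hh = H$ iff $h \in H$; stating this one line would close the gap between your $\bar\iota$ and the embedding named in the statement.
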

\begin{proof}
If $T$ is a $G$-torsor, then since $G$ permutes the coordinates simply transitively, all the coordinates have the same image; that is, the field factors of $G$ are all isomorphic to a Galois extension $L/K$. Fix one coordinate $\iota_1 : T \to L \subseteq K^\sep$, which necessarily factors through one field factor $L_1$ of $T$. The torsor automorphisms permute the field factors of $T$; the subgroup $H$ of those that send $L_1$ to itself maps isomorphically to the Galois group $\Gal(L/K)$. If we choose instead a different coordinate $\iota_i$, the map $\Gal(L/K) \to G$ and its image $H$ get conjugated by the unique $g \in G$ for which $g \circ \iota_1 = \iota_i$.

Conversely, suppose $L$ and an embedding
\[
  \Gal(L/K) \longto^\sim H \subseteq G
\]
are given. Let $1 = g_1,\ldots, g_r$ be coset representatives for $G/H$. Then $g_2,\ldots, g_r$ must map any field factor $L_1 \cong L$ isomorphically onto the remaining field factors $L_2,\ldots,L_r$, each $L_i$ occurring once. To finish specifying the $G$-action on $T \cong L_1 \cross \cdots \cross L_r$, it suffices to determine $g|_{L_i}$ for each $g \in G$. Factor $g g_i = g_j h$ for some $j \in \{1,\ldots,r\}$, $h \in H$. Then for each $x \in L_1$, $g(g_i(x)) = g_j(h(x))$, and the value of this is known because the $H$-action on $L_1$ is known. Then $g|_T$ is an automorphism. To show that this gives a torsor structure, we must show that for $g, g' \in G$, we have $(gg')|_T = g|_T \circ g'|_T$. We restrict our attention to one field factor $L_i$. Let $h, h' \in H$ and $j, k \in \{1,\ldots,n\}$ such that
\[
  g' g_i = g_j h' \textand g g_j = g_k h.
\]
Then $g g' g_i = g g_j h' = g_k h h'$, and $(g g')|_{L_i} = h h' = g \circ g' |_{L_i}$ as maps from $L_i$ to $L_k$.
\end{proof}

\begin{comment}
Because all field factors of a torsor are isomorphic, we will sometimes speak of ``the'' field factor of a torsor.

\subsubsection{Torsors over \'etale algebras}
On occasion, we will speak of a $G$-torsor over $L$, where $L$ is itself a product $K_1 \cross \cdots \cross K_r$ of fields. By this we simply mean a product $T_1 \cross \cdots \cross T_r$ where each $T_i$ is a $G$-torsor over $K_i$. This case is without conceptual difficulty, and some theorems on torsors will be found to extend readily to it, such as the following variant of the fundamental theorem of Galois theory:
\begin{thm} \label{thm:tor_Gal}
Let $T$ be a $G$-torsor over an \'etale algebra $L$. For each subgroup $H \subseteq G$,
\begin{enumerate}[(a)]
  \item The fixed algebra $T^H$ is uniformly of degree $[G : H]$ over $L$ (that is, of this same degree over each field factor of $L$);
  \item $T$ is an $H$-torsor over $T^H$, under the same action;
  \item If $H$ is normal, then $T^H$ is also a $G/H$-torsor over $L$, under the natural action.
\end{enumerate}
\end{thm}
\begin{proof}
Adapt the relevant results from Galois theory.
\end{proof}
\end{comment}

% subsection torsors (end)

\section{Galois cohomology: \texorpdfstring{$H^0$ and $H^1$}{H⁰ and H¹}}
\label{sec:gcoho}
In this section, we explain how to understand the first two Galois cohomology groups of a Galois module. We begin by describing Galois modules.

\begin{prop}[\textbf{a description of Galois modules}] \label{prop:Gal_mod}
  Let $M$ be a finite abelian group, and let $K$ be a field. Let $M^-$ denote the subset of elements of $M$ of maximal order $m$, the exponent of $M$. The following objects are in bijection:
  \begin{enumerate}[$($a$)$]
    \item \label{it:Gal_mod} Galois module structures on $M$ over $K$, that is, continuous homomorphisms $\phi : G_K \to \Aut M$;
    \item \label{it:Gal_mod_T} $(\Aut M)$-torsors $T/K$;
    \item \label{it:Gal_mod_L0} $(\Aut M)$-extensions $L_0/K$ of degree $|M|$, where $\Aut M \hookrightarrow \Sym M$ in the natural way;
    \item \label{it:Gal_mod_L-} $(\Aut M)$-extensions $L^-/K$ of degree $|M^-|$, where $\Aut M \hookrightarrow \Sym M^-$ in the natural way.
  \end{enumerate}
\end{prop}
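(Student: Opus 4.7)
The plan is to obtain all four bijections as instances of Propositions \ref{prop:G_sets_G_strucs} and \ref{prop:torsor}, using that (a) is another name for the set of continuous homomorphisms $G_K \to \Aut M$, while (b), (c), (d) each encode $\Aut M$ via a faithful embedding into a symmetric group. In every case the extension or torsor associated to a given $\phi$ is just the \'etale algebra $\A(X)$ produced by the functor of Proposition \ref{prop:G_sets} from the $G_K$-set $X$ on which $\Aut M$ acts.

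First I would handle (a) $\leftrightarrow$ (c), which is immediate from Proposition \ref{prop:G_sets_G_strucs} applied to the tautologically faithful embedding $\Aut M \hookrightarrow \Sym M$: $(\Aut M)$-extensions of degree $|M|$ correspond to continuous homomorphisms $G_K \to \Aut M$, with $L_0 = \A(M)$. Next, for (a) $\leftrightarrow$ (b), I would invoke Proposition \ref{prop:torsor}, which bijects $(\Aut M)$-torsor structures on $T$ with $(\Aut M)$-structures on $T$ via the Cayley embedding $\Aut M \hookrightarrow \Sym(\Aut M)$, and compose with Proposition \ref{prop:G_sets_G_strucs} as before; concretely $T = \A(\Aut M)$ for the left-translation action. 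The remaining bijection (a) $\leftrightarrow$ (d) is formally parallel to (c), applied now to the embedding $\Aut M \hookrightarrow \Sym M^-$ obtained by restricting the action on $M$ to the $(\Aut M)$-stable subset $M^-$, with $L^- = \A(M^-)$.

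The one nontrivial verification, and the main obstacle, is faithfulness of $\Aut M \to \Sym M^-$: an automorphism of $M$ fixing every element of maximal order must be the identity. I would prove this by showing that $M^-$ generates $M$ as an abelian group. Via the structure theorem, write $M \isom \bigoplus_{i=1}^k \ZZ/d_i\ZZ$ with $d_1 \mid d_2 \mid \cdots \mid d_k = m$, and let $e_1,\ldots,e_k$ be the standard generators. An element $(a_1,\ldots,a_k)$ lies in $M^-$ exactly when $\gcd(a_k,m) = 1$, so in particular $e_k \in M^-$ and, for each $i < k$ and each $a \in \ZZ/d_i\ZZ$, also $e_k + a e_i \in M^-$; subtracting $e_k$ yields $a e_i$ for every $a$, so all of $e_1,\ldots,e_k$ lie in the subgroup generated by $M^-$. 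Once faithfulness is established, Proposition \ref{prop:G_sets_G_strucs} places (d) on exactly the same footing as (c), and the proof is complete.
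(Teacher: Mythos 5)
Your proof is correct and follows exactly the paper's route: the bijections are immediate from Propositions \ref{prop:G_sets_G_strucs} and \ref{prop:torsor}, and the only real content is that $M^-$ generates $M$ (so that $\Aut M \to \Sym M^-$ is injective), which the paper dismisses as following from the classification of finite abelian groups and which you spell out. One small inaccuracy: membership of $(a_1,\ldots,a_k)$ in $M^-$ is \emph{not} equivalent to $\gcd(a_k,m)=1$ (e.g.\ $(1,2)$ has order $6$ in $\ZZ/2\ZZ \oplus \ZZ/6\ZZ$), but only the implication $\gcd(a_k,m)=1 \Rightarrow (a_1,\ldots,a_k)\in M^-$ is used, and that direction is true, so your generation argument stands.
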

\begin{proof}
  In writing the map $\Aut M \rightarrow \Sym M^-$ in item \ref{it:Gal_mod_L-} as an injection, we need that $M^-$ generates $M$; this follows easily from the classification of finite abelian groups.
  
The bijections are immediate from Propositions \ref{prop:G_sets_G_strucs} and \ref{prop:torsor}.
\end{proof}

We will denote $M$ with its Galois-module structure coming from these bijections by $M_{\phi}$, $M_{T}$, or $M_{L_0}$. Note that $T$, $L_0$, and $L^-$ are mutual resolvents.

\begin{examp} \label{ex:cubic_Gal_mod_2}
  If $M = C_3$, then the Galois module structures $M = M_T$ are in natural bijection with $C_2$-torsors over $K$, that is, quadratic \'etale extensions $T/K$, as we saw in Example \ref{ex:cubic_Gal_mod}. In this instance, $L^- = T$ and $L_0 = K \cross T$.
\end{examp}

The zeroth cohomology group $H^0(K, M)$ has a ready parametrization:
\begin{prop}[\textbf{a description of $H^0$}]\label{prop:H0}
  Let $M$ be a $K$-Galois module. The elements of $H^0(K, M)$ are in bijection with the degree-$1$ field factors of the extension $L_0$ corresponding to $M$ in the bijection of Proposition \ref{prop:Gal_mod}\ref{it:Gal_mod_L0}.
\end{prop}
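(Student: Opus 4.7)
The plan is to trace through the identifications set up in Proposition \ref{prop:Gal_mod} and then match up $G_K$-orbits on coordinates with field factors of the \'etale algebra.

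First I would unwind what $L_0$ is as a $G_K$-set. By construction in Proposition \ref{prop:Gal_mod}\ref{it:Gal_mod_L0}, the $(\Aut M)$-extension $L_0$ is the one whose permutation representation is the composition
\[
  G_K \xrightarrow{\phi} \Aut M \hookrightarrow \Sym M,
\]
where the last map is the natural action of $\Aut M$ on the underlying set of $M$. Equivalently, by Proposition \ref{prop:G_sets}, $\Coord(L_0)$ is canonically isomorphic to $M$ as a $G_K$-set, with $G_K$ acting through $\phi$.

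Next I would invoke the standard dictionary between field factors of an \'etale algebra and orbits of $G_K$ on the coordinate set (a consequence of Proposition \ref{prop:G_sets}, or directly of Proposition \ref{prop:sub} applied to the coarsest $G_K$-stable equivalence relation inside each orbit): a field factor of $L_0$ corresponds to a single $G_K$-orbit on $\Coord(L_0)$, and its degree over $K$ equals the size of the orbit. In particular, degree-$1$ field factors of $L_0$ correspond precisely to singleton $G_K$-orbits on $\Coord(L_0) \cong M$, i.e.\ to $G_K$-fixed elements of $M$.

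Finally, I would conclude by observing that $M^{G_K}$ is by definition $H^0(K, M)$ (as stated just before Section \ref{sec:etale}). Composing the three bijections
\[
  \{\text{degree-$1$ field factors of }L_0\} \; \longleftrightarrow \;
  \{\text{fixed points of }G_K \text{ on } \Coord(L_0)\} \; \longleftrightarrow \;
  M^{G_K} = H^0(K,M)
\]
gives the claim. There is no real obstacle here; the only thing to be careful about is to record which identification of $\Coord(L_0)$ with $M$ one uses (it is the canonical one coming from the construction of Proposition \ref{prop:Gal_mod}), so that ``fixed point of $G_K$ on coordinates'' literally means ``element of $M$ fixed by the $\phi$-action,'' with no hidden twisting.
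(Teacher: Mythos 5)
Your proposal is correct and follows essentially the same route as the paper's own proof: identify $\Coord(L_0)$ with $M$ as $G_K$-sets via Proposition \ref{prop:Gal_mod}, match degree-$1$ field factors with singleton orbits, and conclude with $M^{G_K} = H^0(K,M)$. The extra care you take about which identification of $\Coord(L_0)$ with $M$ is used is a reasonable elaboration but not a new idea.
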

\begin{proof}
  Proposition \ref{prop:Gal_mod} establishes an isomorphism of $G_K$-sets between the coordinates of $L_0$ and the points of $M$. A degree-$1$ field factor corresponds to an orbit of $G_K$ on $\Coord(L_0)$ of size $1$, which corresponds exactly to a fixed point of $G_K$ on $M$.
\end{proof}

Deeper and more useful is a description of $H^1$. We need the notion of the \emph{holomorph} of a group (see \cite[\textsection 6.3]{HallGroups}; compare \cite{LemHol25}):
\begin{prop}
  For an abelian group $M$, denote by $\Hol M$ the group of affine-linear transformations of $M$; that is, maps
  \[
    \lambda_{a,t}(x) = a x + t, \quad a \in \Aut M, \quad t \in M
  \]
  composed of an automorphism and a translation, the group operation being composition. Then $\Hol M = M \rtimes \Aut M$ is the semidirect product under the natural action of $\Aut M$ on $M$. In particular, there is a split exact sequence
  \begin{equation}
    0 \to M \to \Hol M \to \Aut M \to 0.
  \end{equation}
\end{prop}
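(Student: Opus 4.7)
The plan is to verify the claim by direct computation: once the composition law of two affine-linear maps is written out explicitly, every claim in the proposition falls out.

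First I would check that the set $\{\lambda_{a,t} : a \in \Aut M, t \in M\}$ is closed under composition and inversion, so that it is indeed a group under composition. Computing
\[
  (\lambda_{a,t} \circ \lambda_{b,s})(x) = a(bx + s) + t = (ab)x + (as + t),
\]
one reads off the multiplication rule $\lambda_{a,t} \cdot \lambda_{b,s} = \lambda_{ab,\,as+t}$. From this it is immediate that $\lambda_{1,0}$ is the identity and that $\lambda_{a,t}^{-1} = \lambda_{a^{-1},\,-a^{-1}t}$. Associativity is inherited from function composition.

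Next I would identify the two natural subgroups. The translations $N = \{\lambda_{1,t} : t \in M\}$ form a subgroup, and the assignment $t \mapsto \lambda_{1,t}$ is a group isomorphism $M \isom N$ since $\lambda_{1,t}\lambda_{1,s} = \lambda_{1,t+s}$. Similarly, $H = \{\lambda_{a,0} : a \in \Aut M\}$ is a subgroup isomorphic to $\Aut M$ via $a \mapsto \lambda_{a,0}$. A one-line calculation using the multiplication rule shows that $N \cap H = \{\lambda_{1,0}\}$ and that every element factors uniquely as $\lambda_{a,t} = \lambda_{1,t}\lambda_{a,0}$, so $\Hol M = N \cdot H$ with $N \cap H$ trivial. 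Conjugating a translation by an automorphism,
\[
  \lambda_{a,0}\,\lambda_{1,s}\,\lambda_{a,0}^{-1} = \lambda_{1,\,as},
\]
shows that $N$ is normal and that the induced action of $H \isom \Aut M$ on $N \isom M$ is the tautological one. This exhibits $\Hol M$ as the internal semidirect product $M \rtimes \Aut M$ with the natural action.

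Finally, I would package this as the split exact sequence. Define $\pi : \Hol M \to \Aut M$ by $\lambda_{a,t} \mapsto a$; the multiplication rule makes $\pi$ a homomorphism with kernel exactly $N \isom M$, and the map $a \mapsto \lambda_{a,0}$ is a homomorphic section, yielding
\[
  0 \to M \to \Hol M \to \Aut M \to 0.
\]
The only thing to be careful about is the order of factors in the multiplication rule, which dictates that $\Aut M$ must act on $M$ on the left in the semidirect product; otherwise the proof is entirely routine, with no real obstacle.
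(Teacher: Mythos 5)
Your proof is correct and takes essentially the same route as the paper: a direct computation of the composition law $\lambda_{a,t}\circ\lambda_{b,s}=\lambda_{ab,\,as+t}$, from which the semidirect-product structure and the split exact sequence are read off. You merely spell out the internal-semidirect-product verification (normality of the translation subgroup, the section $a\mapsto\lambda_{a,0}$) that the paper's one-line proof leaves implicit.
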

\begin{proof}
  Simply compute that
  \begin{equation} \label{eq:semidirect}
    \lambda_{a,t} \circ \lambda_{b,u} = \lambda_{a b, b t + u}
  \end{equation}
  and note that this is the same as the group law on the semidirect product.
\end{proof}

Since an affine-linear map is a bijection, we have an embedding
\[
  \Hol M \hookrightarrow \Sym M.
\]
\begin{thm}[\textbf{a description of $H^1$}]\label{thm:H1}
  Let $M = M_{\phi} = M_{L_0}$ be a Galois module.
  \begin{enumerate}[$($a$)$]
    \item \label{it:z1_hom} $Z^1(K,M)$ is in natural bijection with the set of continuous homomorphisms $\psi : G_K \to \Hol M$ such that the following triangle commutes:
    \begin{equation} \label{eq:tri_h1}
      \xymatrix{
        G_K \ar[r]^\psi \ar[dr]_\phi & \Hol M \ar[d]^\pi \\
        & \Aut M
      }
    \end{equation}
    \item \label{it:h1_hom} $H^1(K,M)$ is in natural bijection with the set of such $\psi : G_K \to \Hol M$ up to postconjugation by $M \subseteq \Hol M$.
    \item \label{it:h1_ext} $H^1(K,M)$ is also in natural bijection with the set of $(\Hol M)$-extensions $L/K$ (with respect to the embedding $\Hol M \hookrightarrow \Sym M$) equipped with an isomorphism from their resolvent $(\Aut M)$-torsor $T_L$ to $T$. The bijection proceeds as follows: if $L$ corresponds to a cocycle $\sigma : G \to M$, there is a labeling $\{\iota_x\}_x$ of the coordinates of $L$ with the elements of $M$ such that for all $g \in G$,
    \begin{equation} \label{eq:G_act_L}
      g \circ \iota_x = \iota_{g x + \sigma(g)}.
    \end{equation}
    \item \label{it:Res} If $K'/K$ is a field extension, the restriction map
    \[
      \Res : H^1(K, M) \to H^1(K', M)
    \]
    can be described as follows. If $\sigma \in H^1(K,M)$ corresponds to an \'etale algebra $L$ with isomorphism $\theta : T_L \to T$, then $\Res \sigma$ corresponds to the \'etale algebra $L' = L \tensor_K K'$ with isomorphism $\theta' = \theta \tensor_{K} L' : T_{L'} \isom T_L \tensor_K K' \to T \tensor_K K'$.
  \end{enumerate}
\end{thm}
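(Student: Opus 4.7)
The plan is to treat the four claims in turn, reducing each to a structural result already proved.

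For (\ref{it:z1_hom}), I would set $\psi(g) = \lambda_{\phi(g), \sigma(g)}$ and use the semidirect multiplication \eqref{eq:semidirect} in $\Hol M$ to compute $\psi(g)\psi(h)$ directly. The product comes out to $\lambda_{\phi(gh),\,\sigma(g)+\phi(g)\cdot\sigma(h)}$, so $\psi$ is a homomorphism if and only if
\[
  \sigma(gh) = \sigma(g) + \phi(g)\cdot\sigma(h),
\]
which is exactly the cocycle condition $\partial^1\sigma = 0$. Continuity of $\psi$ is inherited from that of $\sigma$ and $\phi$, and the triangle \eqref{eq:tri_h1} commutes by construction; conversely, any continuous $\psi$ making \eqref{eq:tri_h1} commute is of the form $g\mapsto\lambda_{\phi(g),\sigma(g)}$ for a unique continuous $\sigma:G_K\to M$. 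For (\ref{it:h1_hom}), I would then compute that conjugation of such a $\psi$ by $\lambda_{\id,m}\in M\subseteq\Hol M$ produces the lift associated to the cocycle $g\mapsto\sigma(g)-(g\cdot m-m)$, which differs from $\sigma$ by the coboundary $\partial^0 m$ up to sign. Since $B^1(K,M)$ is precisely the set of such maps, $M$-conjugacy of lifts and cohomology of cocycles coincide.

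For (\ref{it:h1_ext}), I would combine (\ref{it:h1_hom}) with Proposition \ref{prop:G_sets_G_strucs}, which identifies $(\Hol M)$-extensions $L/K$ with continuous $\psi:G_K\to\Hol M$ modulo $\Hol M$-conjugation. Writing $\Hol M = M\rtimes\Aut M$, this conjugation factors into an $M$-part, which preserves $\pi\circ\psi=\phi$ and hence the resolvent torsor $T_L$ together with its identification with $T$, and an $\Aut M$-part, which can further twist this identification. Thus $(\Hol M)$-extensions $L$ equipped with a chosen isomorphism $T_L\isom T$ correspond to lifts $\psi$ up to $M$-conjugation only, and by (\ref{it:h1_hom}) this is $H^1(K,M)$. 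The coordinate labeling \eqref{eq:G_act_L} follows by tracing definitions: under Proposition \ref{prop:G_sets_G_strucs}, $\Coord(L)$ is identified with the $\Hol M$-set $M$, and $g\in G_K$ acts through the permutation $\psi(g):x\mapsto\phi(g)x+\sigma(g)$.

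Part (\ref{it:Res}) then drops out of Proposition \ref{prop:respects_base_chg}: base change sends $\phi_L$ to its restriction along $G_{K'}\to G_K$, hence sends $\sigma$ to its restriction, which is by definition $\Res\sigma$; and the chosen torsor isomorphism base-changes to $\theta\tensor_K K'$ because resolvents commute with base change (another application of Proposition \ref{prop:respects_base_chg}). The main obstacle I anticipate is the ambiguity-counting in (\ref{it:h1_ext}): one must make precise that fixing the isomorphism $T_L\isom T$, and not merely its existence, is exactly what cuts $\Hol M$-conjugacy down to $M$-conjugacy, by checking that inner $\Aut M$-conjugation acts freely on the set of such identifications lying over a given $\phi$. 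Once this rigidification is correctly pinned down, the remaining content is a routine transcription between the cochain picture and the \'etale-algebra picture via the semidirect product law in $\Hol M$.
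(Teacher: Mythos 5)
Your proposal is correct and follows essentially the same route as the paper: the lift $\psi(g)=\lambda_{\phi(g),\sigma(g)}$ and the semidirect product law for (a), coboundaries as $M$-conjugation for (b), the rigidification of $\Hol M$-conjugacy to $M$-conjugacy via the fixed isomorphism $T_L\isom T$ for (c), and Proposition \ref{prop:respects_base_chg} for (d). The subtlety you flag about the $\Aut M$-part of the conjugation is exactly the point the paper also makes (fixing the torsor isomorphism pins down $\pi\circ\psi=\phi$ exactly, not merely up to conjugacy).
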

\begin{proof}
  By the standard construction of group cohomology, $Z^1$ is the group of continuous crossed homomorphisms
  \[
  Z^1(K, M) = \{\sigma : G_K \to M \mid \sigma(g h) = \sigma(g) + \phi(g) \sigma(h)\}.
  \]
  Send each $\sigma$ to the map
  \begin{align*}
    \psi : G_K &\to \Hol M \\
    g &\mapsto \lambda_{\phi(g), \sigma(g)}.
  \end{align*}
  Since the multiplication law on $\Hol M$ is given by
  \[
    (a_1, t_1) \cdot (a_2, t_2) = (a_1 a_2, t_1 + a_1 t_2),
  \]
  we see that the conditions for $\psi$ to be a homomorphism are exactly those for $\sigma$ to be a crossed homomorphism, establishing \ref{it:z1_hom}. For \ref{it:h1_hom}, we observe that adding a coboundary $\partial(x)(g) = g(x) - x$ to a crossed homomorphism $\sigma$ is equivalent to postconjugating the associated map $\psi : G_K \to \Hol M$ by $x$. As to \ref{it:h1_ext}, a $(\Hol M)$-extension carries the same information as a map $\psi$ up to conjugation by \emph{the whole of $\Hol M$}. Specifying the isomorphism from the resolvent $(\Aut M)$-torsor to $T$ means that the map $\pi \circ \psi = \phi : G_K \to \Aut M$ is known exactly, not just up to conjugation. Hence $\psi$ is known up to conjugation by $M$, and so we get a bijection to $H^1(K,M)$ by \ref{it:h1_hom}. The relation \ref{eq:G_act_L} follows by tracing the embedding $\Hol M \hookrightarrow \Sym M$ through the parametrization given by Propositions \ref{prop:G_sets} and \ref{prop:G_sets_G_strucs}. Part \ref{it:Res} is a straightforward consequence of Proposition \ref{prop:respects_base_chg}.
\end{proof}
\begin{rem}
  The zero cohomology class corresponds to the extension $L_0$ of Proposition \ref{prop:Gal_mod}\ref{it:Gal_mod_L0}, with its structure given by the embedding $\Aut M \hookrightarrow \Hol M$. This is the unique cohomology class whose corresponding $(\Hol M)$-extension has a field factor of degree $1$.
\end{rem}

If $M = M_{T}$ is a Galois module and $\sigma \in Z^1(K,M)$ is the Galois module corresponding to a $(\Hol M)$-extension $L/K$, we can also take the $ (\Hol M) $-closure of $L$ (as in Example \ref{ex:Cayley}), a $ (\Hol M) $-torsor $E$ which fits into the following diagram:
\begin{equation}
\begin{gathered}
\xymatrix@dr{
    E \ar@{-}[r]\ar@{-}[d] & T \ar@{-}[d] \\
    L \ar@{-}[r] & K }
\end{gathered}
\end{equation}
Because of the semidirect product structure of $ \Hol M $, we have $ E \isom L \tensor_K T $. To summarize, here are the permutation representations of finite groups that yield each of the \'etale algebras discussed here:
\begin{equation}
\xymatrix{
  0 \ar[r] & M \ar[r] & \Hol M \ar[r] \ar@{^{(}->}[ld]_{\text{yields $L$}} \ar@{^{(}->}[d]^{\text{yields $E$}} & \Aut M \ar[r] \ar@{^{(}->}[d]^{\text{yields $T$}} & 0 \\
  & \Sym M & \Sym(\Hol M) & \Sym(\Aut M)
}
\end{equation}

Understanding the group operation on $H^1(K,M)$ is a bit awkward in this framework, but at the very least, we can state the following:
\begin{prop}
  If coclasses $\sigma_1, \sigma_2, \sigma_1 + \sigma_2 \in H^1(K,M)$ correspond to \'etale algebras $L_1, L_2, L$ under Theorem \ref{thm:H1}\ref{it:h1_ext}, then $L$ is a subalgebra of $L_1 \tensor_K L_2$.
\end{prop}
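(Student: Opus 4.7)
The plan is to translate the statement through the dictionary of Theorem \ref{thm:H1}\ref{it:h1_ext} and Proposition \ref{prop:sub}, after which it becomes a one-line check. I'll think of each $L_i$ not as an abstract algebra but as its Minkowski $G_K$-set of coordinates; tensor products correspond to products of $G_K$-sets, and subalgebras to $G_K$-stable quotients of the coordinate set (equivalently, to $G_K$-equivariant surjections out of the coordinate set).

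Concretely, first I fix the labelings given by Theorem \ref{thm:H1}\ref{it:h1_ext}: the coordinates of $L_1, L_2, L$ are each identified with $M$ in such a way that
\[
  g \cdot_i x = \phi(g)(x) + \sigma_i(g), \quad g \cdot x = \phi(g)(x) + \sigma_1(g) + \sigma_2(g).
\]
Next I invoke the standard fact that the tensor product of \'etale algebras corresponds to the product of $G_K$-sets (the coordinates of $L_1 \tensor_K L_2$ are pairs $(\iota_1,\iota_2)$ acting as $(\iota_1 \otimes \iota_2)(x_1 \otimes x_2) = \iota_1(x_1)\iota_2(x_2)$, with diagonal Galois action). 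Thus the coordinate $G_K$-set of $L_1 \tensor_K L_2$ is $M \cross M$ with action $g \cdot (x,y) = (\phi(g)(x) + \sigma_1(g),\ \phi(g)(y) + \sigma_2(g))$.

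The main step is now the observation that the addition map
\[
  \mu : M \cross M \longrightarrow M, \qquad (x,y) \longmapsto x + y
\]
is $G_K$-equivariant, where the target carries the action associated to $\sigma_1 + \sigma_2$. This is immediate from additivity of $\phi(g)$ and of the cocycle sum:
\[
  \mu(g\cdot(x,y)) = \phi(g)(x+y) + \sigma_1(g) + \sigma_2(g) = g\cdot\mu(x,y).
\]
Since $\mu$ is surjective, its fibers form a $G_K$-stable equivalence relation on $\Coord(L_1 \tensor_K L_2)$ whose quotient $G_K$-set is isomorphic to the coordinate set of $L$. Applying Proposition \ref{prop:sub}, this equivalence relation cuts out a subalgebra of $L_1 \tensor_K L_2$ whose corresponding $G_K$-set is that of $L$, hence which is isomorphic to $L$ by Proposition \ref{prop:G_sets}.

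There is essentially no obstacle beyond bookkeeping: the only subtle point is being careful that ``$L$'' here means the underlying \'etale algebra (ignoring its $\Hol M$-structure and the chosen isomorphism to $T$), since the subalgebra embedding need not respect the full $\Hol M$-extension data — it only respects the $G_K$-action that produces the algebra in the first place.
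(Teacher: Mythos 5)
Your proof is correct and follows essentially the same route as the paper: label the coordinates of $L_1 \tensor_K L_2$ by $M \cross M$ with the twisted diagonal action, and pass to the quotient under addition, which Proposition \ref{prop:sub} converts into a subalgebra whose coordinate $G_K$-set is that of $\sigma_1 + \sigma_2$. (The paper phrases the quotient as the $G_K$-stable equivalence relation $x_1 + x_2 = y_1 + y_2$ rather than as the fibers of the equivariant addition map, but these are the same thing.)
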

\begin{proof}
  By Theorem \ref{thm:H1}\ref{it:h1_ext}, the coordinates $\iota^{(1)}_x, \iota^{(2)}_x$ of $L_1$ and $L_2$ can be labeled with the elements of $M$ such that for all $g \in G_K$,
  \begin{equation*}
    g \circ \iota^{(j)}_x = \iota^{(j)}_{g x + \sigma_j(g)}.
  \end{equation*}
  Now $L_1 \tensor L_2$ has $|M|^2$ coordinates, which can be labeled $\{\iota_{x_1,x_2}\}_{(x_1,x_2) \in M\cross M}$ such that
  \[
    \iota_{x_1,x_2}(\alpha_1 \tensor \alpha_2) = \iota^{(1)}_x(\alpha_1) \cdot \iota^{(2)}_x(\alpha_2).
  \]
  We compute that
  \[
    g \circ \iota_{x_1,x_2} = \iota_{g x_1 + \sigma_1(g), g x_2 + \sigma_2(g)}.
  \]
  Consider the following equivalence relation on $M \cross M$:
  \[
    (x_1, x_2) \sim (y_1, y_2) \iff x_1 + x_2 = y_1 + y_2.
  \]
  We compute that $\sim$ is $G_K$-invariant because, if $(x_1, x_2) \sim (y_1, y_2)$ and $g \in G$, then
  \begin{align*}
    \big(g x_1 + \sigma_1(g)\big) + \big(g x_2 + \sigma_2(g)\big)
    &= g(x_1 + x_2) + \sigma_1(g) + \sigma_2(g) \\
    &= g(y_1 + y_2) + \sigma_1(g) + \sigma_2(g) \\
    &= \big(g y_1 + \sigma_1(g)\big) + \big(g y_2 + \sigma_2(g)\big).
  \end{align*}
  Therefore, by Proposition \ref{prop:sub}, $\sim$ corresponds to a subextension $L \subseteq L_1 \tensor L_2$ whose coordinates are labeled by the equivalence classes of $\sim$. Identifying the equivalence class $\{(x_1, x_2) : x_1 + x_2 = x\}$ with the corresponding element $x \in M$, we have that $L$ has $n$ coordinates $\{\iota_x\}_{x \in M}$, which are permuted by
  \[
    g \circ \iota_x = \iota_{g x + \sigma_1(x) + \sigma_2(x)}.
  \]
  Hence $L$ corresponds to the coclass $\sigma_1 + \sigma_2$, as claimed.
\end{proof}

\subsection{Unramified cohomology}

If $K$ is a discretely valued field (e.g.\ $\FF_q\laurent{t}$ or $\QQ_p$), a coclass $\sigma \in H^1(K,M)$ is called \emph{unramified} if it is represented by a cocycle $\sigma : \Gal(K^\sep/K) \to M$ that factors through the Galois group $\Gal(K^{\ur}/K)$ of its maximal unramified extension. The subgroup of unramified coclasses is denoted by $H^1_\ur(K, M)$. If $M$ itself is unramified (that is, $M = M_T$ with $T$ unramified), this is equivalent to the associated \'etale algebra $L$ (equivalently $E$) being unramified.

\subsection{The Tate dual}

If $m$ is a positive integer not divisible by $\ch K$, one particularly simple $K$-Galois module is the group $\mu_m$ of $m$th roots of unity. Under Proposition \ref{prop:Gal_mod} it corresponds to the $(\ZZ/m\ZZ)^\cross$-torsor $T = L^- = K[X]/\Phi_m(X)$, where $\Phi_m$ is the cyclotomic polynomial, a direct summand of $L = K[X]/(X^m - 1)$.

If $M$ is a Galois module with exponent $m$, then
\[
  M' = \Hom(M, \mu_m)
\]
is also a Galois module, called the \emph{Tate dual} of $M$ (a Tate twist of the usual Pontryagin dual $M^* = \Hom(M, \QQ/\ZZ)$. The modules $M$ and $M'$ have the same order and are isomorphic as abstract groups, though not canonically; as Galois modules, they are frequently not isomorphic at all. However, there are canonical isomorphisms $\Aut M' \isom \Aut M$ and $M'' \isom M$.

\begin{examp}\label{ex:cubic_Tate_dual}
Assume that $\ch K \neq 2, 3$, and let $M$ be a Galois module of order $3$. Then $M = M_T$ corresponds to a quadratic algebra $T = K[\sqrt{D}]$ as noted in Example \ref{ex:cubic_Gal_mod}, then the relevant $\mu_m$ is
\[
  \mu_3 \isom M_{K[\sqrt{-3}]}.
\]
The Galois action on $M' = \Hom(M, \mu_3)$ depends on the Galois action on both $M$ and $\mu_3$. As can be seen from the results on $G_K$-sets of size $2$ presented in Knus and Tignol \cite{QuarticExercises}), we get
  \[
  M' = M_{K[\sqrt{-3D}]}.
  \]
  This underlies close connections between the quadratic extensions $K[\sqrt{D}]$ and $K[\sqrt{-3D}]$, such as the Scholz reflection principle \cite{ScholzRefl,EV}.
\end{examp}

\begin{examp}
  A module $M$ of underlying group $C_2 \cross C_2$ is always self-dual, regardless of what Galois-module structure is placed on it. This can be proved by noting that $\mu_2$ has the trivial action and $M$ has a unique alternating bilinear form (see Theorem \ref{thm:Tate_quartic} below).
\end{examp}

\begin{examp}
For $p \nmid \ch K$ a prime and $k$ an integer, we can consider the tensor power $M_k = \mu_p^{\tensor k}$ as a Galois module over $K$. This is a cyclic module of order $p$ on which a Galois action is defined as follows: if $g \in G_K$, then let $a \in \FF_p$ be the exponent for which $g(\zeta_p) = \zeta_p^a$, and let $g$ act on $M_k$ by multiplication by $a^k$. By Fermat's little theorem, the structure of $M_k$ depends only on the congruence class of $k$ modulo $p - 1$. We compute $M_k' = M_{1-k}$.
\end{examp}

In a few cases, $\Hol M$ is the full symmetric group $\Sym M$; in other words, every bijective self-map of $M$ is affine-linear. There are only four such cases:
\begin{itemize}
  \item $M = \{1\}$, $\Hol M \cong S_1$
  \item $M = C_2$, $\Hol M \cong C_2 \cong S_2$
  \item $M = C_3$, $\Hol M \cong C_3 \rtimes C_2 \cong S_3$
  \item $M = C_2 \cross C_2$, $\Hol M \cong (C_2 \cross C_2) \rtimes S_3 \cong S_4$.
\end{itemize}
In these cases, every \'etale algebra $L/K$ of degree $\size{M}$ has a unique $(\Hol M)$-structure and thus corresponds to a Galois cohomology element $\sigma \in H^1(K,M)$ for some Galois module structure on $M$. Excluding the degenerate first case, we consider the others:
\begin{examp}\label{ex:quad}
For $M = C_2$, the quadratic \'etale algebras over $K$ are parametrized by the group $H^1(K, C_2) \allowbreak = \Hom(G_K, C_2)$.
\end{examp}
\begin{examp}\label{ex:cubic}
For $M = C_3$, a cubic \'etale algebra $L$ has a quadratic resolvent algebra $T$, as noted in Example \ref{ex:rsv_n2}, and $L$ corresponds to a coclass $\sigma$ in the $3$-torsion group $H^1(K, M_T)$. The correspondence is not quite a bijection; instead, any field $L$ corresponds to a pair of inverse coclasses $\{\sigma, -\sigma\}$ $(\sigma \neq 0)$ in $H^1(K, M_T)$, once for each of the two identifications of the resolvent $T$ with itself. The zero coclass corresponds to the reducible algebra $L = K \cross T$.
\end{examp}
\begin{examp}\label{ex:quartic}
For $M = C_2 \cross C_2$, a quartic \'etale algebra $L$ has a cubic resolvent algebra $R$, as noted in Example \ref{ex:rsv43}. Taking $L^- = R$ yields a Galois-module structure $M_R$ on $M$, where $G_K$ permutes the three non-identity elements of $M$ in the same manner that it permutes the three coordinates of $R$. Then $L$ corresponds to at least one coclass $\sigma$ in the $2$-torsion group $H^1(K, M_R)$. The precise number of coclasses corresponding to a quartic algebra $L$ depends on the Galois group of $L$. For instance, if $G(L/K) \isom S_4$, then $\sigma$ appears only once since $R$ has no automorphisms. However, if $G(L/K) \isom V_4$, then $R \isom K \cross K \cross K$ is totally split, and $L$ corresponds to six elements in $H^1(K, M)$ which are transitively permuted by the action of $\Aut M \isom S_3$ on $H^1(K,M)$.
\end{examp}

\section{Understanding Galois cohomology via Kummer theory}
\label{sec:Kummer}

As a viewpoint for understanding Galois cohomology, \'etale algebras are already more manageable than maps out of an enormous Galois group. But in some cases the Galois cohomology group can be described even more explicitly. One such instance is \emph{Kummer theory:} If $\ch K \nmid n$, the Kummer exact sequence
\[
  0 \to \mu_n \to \(K^\sep\)^\cross \to \(K^\sep\)^\cross \to 0
\]
yields a parametrization
\[
  \Kum : K^\cross/\(K^\cross\)^n \isom H^1(K,\mu_n).
\]
Explicitly, it sends $a \in K^\cross$ to the coclass parametrized by
\[
  L = K[\!\sqrt[n]{a}] = K[X]/(X^n - a),
\]
a degree-$n$ algebra over $K$ with a $\Hol(\ZZ/n\ZZ)$-structure, since every $g \in G_K$ permutes the $n$ coordinates
\[
  \iota_{k} : X \mapsto \zeta_n^{k} \sqrt[n]{a}
\]
in an affine-linear fashion
\[
  g \circ \iota_t = \iota_{s k + t},
\]
where $s \in (\ZZ/n\ZZ)^\cross$ and $t \in \ZZ/n\ZZ$ are determined by $g(\zeta_n) = \zeta_n^s$ and $g(\!\sqrt[n]{a}) = \zeta_n^t\sqrt[n]{a}$.

\subsection{\texorpdfstring{$H^1(K,C_3)$}{H¹(K, C₃)} and cubic extensions}

Another, lesser-known case where a Kummer-theoretic parametrization describes the cohomology is that where $M$ has underlying group $C_3$.

\begin{prop}\label{prop:Kummer_cubic} Let $K$ be a field with $\ch K \neq 2, 3$, and let $ M $ be a $K$-Galois module with underlying group $ C_3 $. Let $T'$ be the quadratic \'etale algebra corresponding to the Tate dual $M'$ under the bijection of Proposition \ref{prop:Gal_mod}\ref{it:Gal_mod_L-}. Then we have a group isomorphism
  \[
  H^1(K, M) \isom T'^{N=1}/(T'^{N=1})^3
  \]
  in which $\delta \in T'^{N=1}/(T'^{N=1})^3$ corresponds to the cubic extension $L/K$ that is the image of the $K$-linear map
  \begin{align*}
    \kappa : K \cross T' &\to \(K^\sep\)^3 \\
    (a,\xi) &\mapsto \(a + \tr_{\(K^\sep\)^2/K^\sep} \xi \omega \sqrt[3]{\delta}\)_{\omega \in (\(K^\sep\)^2)^{N=1}[3]},
  \end{align*}
  where $\sqrt[3]{\delta} \in \(K^\sep\)^2$ is chosen to have norm $1$, and $\omega$ ranges through the set
  \[
  (\(K^\sep\)^2)^{N=1}[3] = \{(1;1), (\zeta_3; \zeta_3^2); ( \zeta_3^2; \zeta_3)\}
  \]
  of cube roots of $1$ in $\(K^\sep\)^2$ of norm $1$.
\end{prop}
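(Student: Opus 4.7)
The plan is to derive the isomorphism from a Kummer-type short exact sequence twisted by the quadratic character of $T'$, and then verify directly that $\im \kappa$ realizes the corresponding $(\Hol M)$-extension. Write $R = T' \tensor_K K^\sep \isom K^\sep \cross K^\sep$ and let $R^{N=1}$ be its $G_K$-submodule of norm-one units. The first key point is that $R^{N=1}[3] = \{(1;1), (\zeta_3; \zeta_3^{-1}), (\zeta_3^{-1}; \zeta_3)\}$ is isomorphic to $M$ as a $G_K$-module: writing $\psi, \chi : G_K \to \{\pm 1\}$ for the quadratic characters giving the $G_K$-action on $\mu_3$ and $M$, so that $T'$ has character $\psi \chi$ by Example \ref{ex:cubic_Tate_dual}, a case check on the four combinations of signs shows that $G_K$ acts on $R^{N=1}[3]$ through $\psi \cdot (\psi \chi) = \chi$.

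Next I would feed $1 \to M \to R^{N=1} \xrightarrow{[3]} R^{N=1} \to 1$ into its long exact cohomology, and compute $H^1(K, R^{N=1})$ from the auxiliary sequence $1 \to R^{N=1} \to R^\cross \xrightarrow{N} \(K^\sep\)^\cross \to 1$. Shapiro's lemma identifies $H^1(K, R^\cross)$ with $H^1(T', \(K^\sep\)^\cross) = 0$ by Hilbert 90, whence $H^1(K, R^{N=1}) \isom K^\cross / N(T'^\cross)$. This last group is annihilated by $2$ since $a^2 = N_{T'/K}(a)$ for $a \in K^\cross$, so cubing on it is the identity, hence injective. The long exact sequence therefore collapses to
\[
  T'^{N=1}/(T'^{N=1})^3 \isom H^1(K, M),
\]
under which $\delta$ maps to the coclass of the cocycle $\sigma(g) = g(\alpha)/\alpha$ for any norm-one cube root $\alpha \in R^{N=1}$ of $\delta$; a different choice of $\alpha$ modifies it by some $\omega \in M$ and alters $\sigma$ by $\partial \omega$. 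The main obstacle is precisely this cohomology step: the slightly unusual observation that $H^1(K, R^{N=1})$ has no $3$-torsion is what makes the twisted Kummer sequence produce an isomorphism rather than just a surjection.

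Finally, to match this coclass with $\im \kappa$ via Theorem \ref{thm:H1}\ref{it:h1_ext}, label the coordinates of $\(K^\sep\)^3$ by the elements $\omega \in M$, identified with $R^{N=1}[3]$, so that $\iota_\omega(\kappa(a, \xi)) = a + \tr(\xi \omega \alpha)$. Using $g$-invariance of $\xi \in T' \subset R$ and $G_K$-equivariance of $\tr : R \to K^\sep$,
\[
  g\(a + \tr(\xi \omega \alpha)\) = a + \tr\(\xi \cdot g(\omega) g(\alpha)\) = a + \tr\(\xi \cdot (g\omega + \sigma(g)) \alpha\),
\]
where multiplication in $M \subset R^\cross$ corresponds to addition in $M$. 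This is exactly the coordinate permutation law (\ref{eq:G_act_L}). Injectivity of $\kappa$ follows from a short linear algebra check: summing the three coordinates forces $a = 0$, after which the three equations $\tr(\xi \omega \alpha) = 0$ form a Vandermonde system forcing $\xi \alpha = 0$ in $R$, and hence $\xi = 0$. A dimension count then identifies $\im \kappa$ with the full $(\Hol M)$-extension $L$.
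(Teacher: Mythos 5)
Your proof is correct, but it takes a genuinely different route from the paper's. You work with the norm-one torus of $T'$: setting $R = T' \tensor_K K^\sep$, you identify $M$ with $R^{N=1}[3]$ by a character computation, feed the twisted Kummer sequence $1 \to M \to R^{N=1} \to R^{N=1} \to 1$ into cohomology, and kill the obstruction term by observing that $H^1(K, R^{N=1}) \isom K^\cross/N_{T'/K}(T'^\cross)$ (via Shapiro and Hilbert 90) is $2$-torsion and hence has no $3$-torsion; the connecting map then gives the isomorphism $T'^{N=1}/(T'^{N=1})^3 \isom H^1(K,M)$ in one stroke, and you match $\im\kappa$ with the corresponding \'etale algebra by verifying the permutation law \eqref{eq:G_act_L} directly and counting dimensions. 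The paper instead restricts to $T'$, using that $\Res$ is injective because $\Cor \circ \Res = [2] = [-1]$ on a $3$-torsion group, applies ordinary Kummer theory over $T'$, characterizes the image of $\Res$ as the conjugation-anti-invariant classes (normalized to norm $1$), and then proves surjectivity by exhibiting an explicit multiplication formula for $\im\kappa$, computing its discriminant to pin down the quadratic resolvent, and checking $L \tensor_K T' = T'[\sqrt[3]{\delta}]$. The two arguments exploit the same coprimality of $2$ and $3$, but yours packages it more structurally (no discriminant or multiplication-table computation needed, and the split case $T' \isom K \cross K$ is handled uniformly rather than separately), while the paper's is more explicit and directly recovers the classical Cardano formula over $T'$. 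The only points you should spell out in a full write-up are the well-definedness of $L$ under replacing $\delta$ by $\delta\beta^3$ with $N(\beta)=1$ (absorb $\beta$ into $\xi$), and the inversion ambiguity in the identification $M \isom R^{N=1}[3]$, which is harmless since $\sigma$ and $-\sigma$ give the same algebra $L$ (as in Example \ref{ex:cubic}).
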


\begin{rem}
  The construction $\kappa$ is none other than the classical solution of the general cubic equation by radicals. The passage from $T = K[\sqrt{D}]$ to its Tate dual $T' = K[\sqrt{-3D}]$ reflects the fact that solving a cubic with three real roots involves an intermediate square root of a negative quantity, a circumstance that vexed Renaissance algebraists and spurred the development of complex numbers in the 16th century \cite{ReadingBombelli}.
\end{rem}

\begin{proof}
The case $T' \isom K \cross K$ is a corollary of the preceding, since $T'^{N=1} \isom K^\cross$. So we assume that $T' \not\isom K \cross K$ is a field. Note that the restriction map $\Res : H^1(K,M) \to H^1(T',M)$ is an injection, having left inverse the negative of the corresponding corestriction map (as $\Cor \circ \Res$ acts by multiplication by $[T' : K] = 2$, which is negation since $H^1(K,M)$ is a $3$-torsion group). Now if $T = K[\sqrt{D}]$, then $T' = K[\sqrt{-3D}]$. Note that $M \isom \mu_3$ as $G_{T'}$-modules, so $H^1(T',M) \isom T'^\cross/(T'^\cross)^3$. Now we must determine which $\delta \in T'^\cross/(T'^\cross)^3$ correspond to coclasses restricted from $K$. Such coclasses must be invariant under the conjugation involution
\begin{align*}
  T'^\cross/(T'^\cross)^3 &\to T'^\cross/(T'^\cross)^3 \\
  \delta &\mapsto \bar\delta^{-1},
\end{align*}
where the exponent $-1$ appears because the isomorphism $M \isom \mu_3$ of $T'$-Galois modules is anti-invariant under conjugation.
So $N(\delta) = \delta \bar \delta = \alpha^3$ with $\alpha \in T'^\cross$, and the scaling $\delta \mapsto \delta(\alpha/\delta)^3$ normalizes the norm $N(\delta)$ to $1$. So the image of $\Res$ is contained in $T'^{N=1}/(T'^{N=1})^3$.

Conversely, given $\delta \in T'$ of norm $1$, it is easy to check that the map $\kappa$ as defined in the theorem statement is injective and that
\[
\kappa(a, \xi) \cdot \kappa(b, \eta) = \kappa\(ab + \tr_{T'/K}(\xi \bar\eta), a\eta + b\xi + \frac{\bar \xi \bar \eta}{\delta}\),
\]
implying that the image $L$ is a cubic algebra, necessarily Minkowski embedded since the three coordinates of $\kappa$ are linearly independent. Multiplying $\delta$ by a cube of norm $1$ does not change $L$. One can compute that $\kappa(a, \xi)$ has discriminant $-27(\xi^3 \delta - \bar \xi^3 \bar \delta)^2$, so $L$ has quadratic resolvent $K[\sqrt{-27 \cdot -3D}] = K[\sqrt{D}] = T$. To see the corresponding Kummer element, we pass to the base change
\begin{align*}
  L \tensor_{K} T' &= \left\{\(\alpha + \tr_{\(K^\sep\)^2/K^\sep} \xi \omega \sqrt[3]{\delta}\)_{\omega \in (\(K^\sep\)^2)^{N=1}[3]} : \alpha \in T', \xi \in T' \tensor_K T' \isom T' \cross T' \right\} \\
  &= \left\{\(\alpha + \beta \omega \sqrt[3]{\delta} + \gamma \omega^2 (\!\sqrt[3]{\delta})^{-1}\)_{\omega \in \mu_3} : \alpha, \beta, \gamma \in T'\right\} \\ &= T'[\!\sqrt[3]{\delta}].
\end{align*}
So all $\delta \in T'^{N=1}/(T'^{N=1})^3$ arise from $H^1(K,M)$.
\end{proof}

\begin{rem}
The theorem statement holds without change in characteristic $2$, but one must in the proof replace $T = K[\sqrt{D}]$ with the appropriate Artin-Schreier extension. In characteristic $3$, the corresponding statement is
\[
  H^1(K, M) \isom T^{\tr = 0} / \wp(T^{\tr = 0})
\]
where $\wp(\xi) = \xi^3 - \xi$ is the Artin-Schreier map. We omit the proofs.
\end{rem}

\subsection{\texorpdfstring{$H^1(K,C_2 \cross C_2)$}{H¹(K, C₂×C₂)} and quartic extensions}

A similar method yields the following:
\begin{prop}[\cite{QuarticExercises}]\label{prop:Kummer_quartic} Let $K$ be a field with $\ch K \neq 2$, and let $ M $ be a $K$-Galois module with underlying group $ C_2 \cross C_2 $. Let $R$ be the cubic \'etale algebra corresponding to $M$ under the bijection of Proposition \ref{prop:Gal_mod}\ref{it:Gal_mod_L-}. Then we have a group isomorphism
  \[
  H^1(K, M) \isom R^{N=1}/(R^{N=1})^2
  \]
  in which $\delta \in R^{N=1}/(R^{N=1})^2$ corresponds to the quartic extension $L/K$ generated by the image of the $K$-linear map
  \begin{align*}
    \kappa : K &\to \(K^\sep\)^4 \\
    \xi &\mapsto \(\tr_{\(K^\sep\)^3/K} \xi \omega \sqrt{\delta}\)_\omega \in (\(K^\sep\)^2)^{N=1}[3],
  \end{align*}
  where $\sqrt{\delta} \in \(K^\sep\)^2$ is chosen to have norm $1$, and $\omega$ ranges through the set
  \[
  (\(K^\sep\)^3)^{N=1}[2] = \{(1;1;1), (1;-1;-1), (-1;-1;1), (-1;1;-1)\}
  \]
  of cube roots of $1$ in $\(K^\sep\)^3$ of norm $1$. Indeed
  \[
  L = K + \kappa(R).
  \]
\end{prop}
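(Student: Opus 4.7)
I would parallel the argument for the cubic case (Proposition \ref{prop:Kummer_cubic}), realizing $M$ cohomologically as the $2$-torsion of a norm-one module over $R$ and running a long-exact-sequence computation.

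The first step is to identify $M$ with $E^{N=1}[2]$ as $G_K$-modules, where $E := R \tensor_K K^\sep \isom (K^\sep)^3$. The four sign patterns $(1;1;1), (1;-1;-1), (-1;1;-1), (-1;-1;1)$ form a $G_K$-stable subgroup of order $4$ inside $E^\times$. The $G_K$-action on the three non-identity elements is by $\phi_R$ permuting coordinates, which matches the action on $M^- = \{e_1, e_2, e_3\}$ under the identification $e_i \leftrightarrow$ (the pattern with $+1$ in position $i$); moreover, the induced multiplicative group law matches the additive one on $M$, since for instance $(1;-1;-1) \cdot (-1;1;-1) = (-1;-1;1)$ matches $e_1 + e_2 = e_3$.

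Second, exploit the short exact sequence
\[
  0 \to M \to E^{N=1} \xrightarrow{\;2\;} E^{N=1} \to 0
\]
of $G_K$-modules, which is exact because squaring on $E^{N=1}$ is surjective: given $e \in E^{N=1}$, pick any square root in $E^\times$; its norm is $\pm 1$, and if $-1$ we negate one coordinate to land in $E^{N=1}$. The resulting long exact sequence gives
\[
  0 \to R^{N=1}/(R^{N=1})^2 \to H^1(K, M) \to H^1(K, E^{N=1})[2].
\]
The companion sequence $0 \to E^{N=1} \to E^\times \xrightarrow{N} (K^\sep)^\times \to 0$, together with Hilbert $90$ on each field factor of $R$ (which kills $H^1(K, E^\times)$), gives $H^1(K, E^{N=1}) \isom K^\times / N(R^\times)$. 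The key arithmetic observation is then that this group has trivial $2$-torsion: if $a \in K^\times$ satisfies $a^2 = N(r)$ for some $r \in R^\times$, then $a/r \in R^\times$ has norm $a^3/a^2 = a$, so $a \in N(R^\times)$. This uses crucially that $[R:K] = 3$ is odd, exactly analogous to how the cubic case exploited $[T':K] = 2$ being coprime to $3$. Combining these steps produces the abstract isomorphism $H^1(K, M) \isom R^{N=1}/(R^{N=1})^2$.

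The final step is to verify that this abstract isomorphism matches the concrete description $\delta \mapsto L = K + \kappa(R)$. The connecting map $\partial : R^{N=1} \to H^1(K, M)$ sends $\delta$ to the class of the cocycle $\sigma_\delta(g) := g(\sqrt{\delta})/\sqrt{\delta} \in E^{N=1}[2]$, where $\sqrt{\delta} \in E^{N=1}$ is any chosen square root. On the other side, the four coordinates of $L$ are the functions $\iota_\omega : \xi \mapsto \tr_{(K^\sep)^3/K^\sep}(\xi \omega \sqrt{\delta})$ indexed by $\omega \in E^{N=1}[2] \isom M$, and a direct computation gives $g \circ \iota_\omega = \iota_{g(\omega) \cdot \sigma_\delta(g)}$, matching the affine-linear action prescribed by Theorem \ref{thm:H1}\ref{it:h1_ext}. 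The main obstacle is the routine but intricate bookkeeping here: showing that $\kappa$ is injective, that $L = K + \kappa(R)$ really is a $4$-dimensional $K$-subalgebra Minkowski-embedded in $(K^\sep)^4$, and that altering $\delta$ by an element of $(R^{N=1})^2$ leaves $L$ unchanged. These verifications proceed in direct analogy with the cubic case.
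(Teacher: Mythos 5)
Your proof is correct. The paper offers no argument of its own here --- it simply cites Knus--Tignol --- so the natural in-paper comparison is with the proof of Proposition \ref{prop:Kummer_cubic}, and your route is genuinely different from that one. There the isomorphism is produced by restricting to the quadratic resolvent of the dual module, invoking Kummer theory over that field, and carving out the image of $\Res$ by a conjugation-invariance and norm-normalization argument. You instead work directly over $K$ with the norm-one torus of $R$: the identification $M \isom E^{N=1}[2]$ for $E = R \tensor_K K^\sep$, the squaring sequence, Shapiro plus Hilbert 90 to get $H^1(K, E^{N=1}) \isom K^\cross/N(R^\cross)$, and the odd-degree observation $N(a/r) = a^3/a^2 = a$ to kill its $2$-torsion. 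This buys you the group isomorphism and the explicit Kummer connecting map in a single stroke, and the same mechanism would handle $p$-elementary modules with resolvent of degree prime to $p$; the restriction--corestriction method of the cubic case avoids introducing the torus but is less natural when $R$ is not a field. Your computation $g \circ \iota_\omega = \iota_{g(\omega)\cdot\sigma_\delta(g)}$ is exactly the compatibility demanded by Theorem \ref{thm:H1}\ref{it:h1_ext}, and note that together with a dimension count it also disposes of the one deferred check that is not purely formal: it shows $K + \kappa(R)$ sits inside the algebra $\A(\psi_\sigma)$ of \eqref{eq:Gset_to_etale}, and equality of dimensions then gives closure under multiplication for free. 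The remaining loose ends you flag are indeed one-line verifications (the four $\omega$ span $(K^\sep)^3$, giving injectivity of $\kappa$; summing over $\omega$ gives $K \cap \kappa(R) = 0$; replacing $\delta$ by $\delta\eta^2$ replaces $\kappa(\xi)$ by $\kappa(\eta\xi)$, leaving the image unchanged).
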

\begin{proof}
  See Knus and Tignol \cite[Proposition~5.13]{QuarticExercises}.
\end{proof}

\subsection{\texorpdfstring{$H^1(K,C_4)$}{H¹(K, C₄)} and quartic extensions}
Another small abelian group is $M = C_4$. Its automorphism group is of order $2$, so, just as in Example \ref{ex:cubic_Gal_mod}, Galois module structures are parametrized by quadratic extensions. Notably, $\Hol C_4 \isom D_4 \isom C_2 \wr C_2$, so by Example \ref{ex:D4}, a $D_4$-extension is none other than a quadratic extension of a quadratic extension. We can describe the first cohomology of cyclic Galois modules of order $4$ in the following way.
\begin{thm} \label{thm:D4}
  Let $K$ be a field, $\ch K \neq 2$, and let $M$ be the cyclic Galois module over $4$ corresponding to a quadratic extension $K[\sqrt{D}]$. Then we have a group isomorphism
  \begin{equation}
    H^1(K,M) \isom \{(\alpha, c) \in K[\sqrt{-D}]^\cross \cross K^\cross : N(\alpha) = c^4\}/\{(\beta^4, N(\beta)) : \beta \in K[\sqrt{-D}]^\cross\} \label{eq:D4_main}
  \end{equation}
  in which the coclass corresponding to a pair $(a + b\sqrt{-D}, c)$ ($a,b,c \in K$) has corresponding quartic algebra
  \[
    L = K\left[\sqrt{2a + 2c^2}, \sqrt{2c + \sqrt{2a + 2c^2}}\right].
  \]
\end{thm}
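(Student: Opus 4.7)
The plan is to extend the method of Proposition~\ref{prop:Kummer_cubic} by base-changing to $F = K[\sqrt{-D}]$. Since $F$ is cut out by the product character $\chi\psi$ (with $\chi$ the cyclotomic character of $\mu_4$ and $\psi$ the character of $M$), we have $\psi|_{G_F} = \chi|_{G_F}$, so fixing $i \in K^\sep$ and sending $1 \mapsto i$ gives a $G_F$-isomorphism $\iota_F: M \isom \mu_4$. For any lift $\tilde\tau \in G_K$ of the nontrivial element $\tau \in \Gal(F/K)$, the identity $\chi(\tilde\tau)\psi(\tilde\tau) = -1$ gives $\tilde\tau \cdot \iota_F = -\iota_F$: the isomorphism is \emph{anti-}invariant under conjugation. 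Kummer theory then gives $H^1(F, M) \isom F^\cross/(F^\cross)^4$ in a way that transports the natural $\Gal(F/K)$-action to the twisted action $\delta \mapsto \bar\delta^{-1}$, whose invariants are $\{\delta : N(\delta) \in (F^\cross)^4\}/(F^\cross)^4$.

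The new wrinkle compared to the cubic case is that restriction $\Res: H^1(K,M) \to H^1(F,M)$ is no longer injective, since $[F:K] = 2$ does not kill $4$-torsion. I would then use the inflation--restriction sequence
\[0 \to H^1(\Gal(F/K), M^{G_F}) \to H^1(K, M) \to H^1(F, M)^{\Gal(F/K)} \to H^2(\Gal(F/K), M^{G_F}).\]
In the generic case ($i \notin K$) one has $M^{G_F} = M[2] \isom \mu_2$ with trivial $\Gal(F/K)$-action, so both flanking terms are $C_2$. Extending a Kummer cocycle of $\delta$ from $G_F$ to a set map on $G_K$ and computing its $M[2]$-valued coboundary shows that the transgression of a class $\delta$ with $N(\delta) = \eta^4$ $(\eta \in F^\cross)$ is $\bar\eta/\eta \in \mu_2 \isom M[2]$; hence the image of $\Res$ is exactly $\{\delta : N(\delta) \in (K^\cross)^4\}/(F^\cross)^4$.

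Next I would define the map $X/Y \to H^1(K, M)$ by sending $(\alpha, c)$ to the cocycle $\sigma$ obtained as follows: pick $\sqrt[4]{\alpha} \in K^\sep$ and normalize $\sqrt[4]{\bar\alpha} := c/\sqrt[4]{\alpha}$; then $\sigma(g) \in \ZZ/4\ZZ$ is the exponent in $g(\sqrt[4]{\alpha}) = i^{\sigma(g)} \sqrt[4]{\alpha}$ if $g \in G_F$ and $g(\sqrt[4]{\alpha}) = i^{\sigma(g)} \sqrt[4]{\bar\alpha}$ otherwise. The cocycle condition $\sigma(gh) = \sigma(g) + \psi(g)\sigma(h)$ is verified using $\chi(\tilde\tau) + \psi(\tilde\tau) = 0$ for $\tilde\tau \notin G_F$. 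Replacing $\sqrt[4]{\alpha}$ by $\beta\sqrt[4]{\alpha}$ for $\beta \in F^\cross$ transforms $(\alpha, c)$ into $(\alpha\beta^4, c N(\beta))$ and changes $\sigma$ by a coboundary, yielding a well-defined group homomorphism $X/Y \to H^1(K, M)$. The projection $(\alpha, c) \mapsto \alpha$ has image $\{\alpha : N(\alpha) \in (K^\cross)^4\}/(F^\cross)^4$ and kernel $\mu_4(K)/N(\mu_4(F))$, matching the two layers of the filtration above; the five lemma then gives bijectivity.

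Finally, by Kummer we have $L \otimes_K F = F[\sqrt[4]{\alpha}]$, and I would verify the tower formula by setting $u = \sqrt{\alpha} + \sqrt{\bar\alpha}$ and $v = \sqrt[4]{\alpha} + \sqrt[4]{\bar\alpha}$ (with $\sqrt{\alpha}\sqrt{\bar\alpha} = c^2$ and $\sqrt[4]{\alpha}\sqrt[4]{\bar\alpha} = c$): one finds $u^2 = \alpha + 2c^2 + \bar\alpha = 2a + 2c^2$ and $v^2 = \sqrt{\alpha} + 2c + \sqrt{\bar\alpha} = u + 2c$, and both $u, v$ are fixed by the involution swapping conjugate $4$th roots, so $u, v \in L$ and $L = K[u, v]$ by dimension. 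The main obstacle is the transgression computation in paragraph~2, which is what pins down precisely which $\tau$-invariant Kummer classes descend to $H^1(K, M)$ and, correspondingly, forces $c$ to live in $K$ rather than just in $F$.
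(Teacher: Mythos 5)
Your route is genuinely different from the paper's, and mostly viable. The paper never invokes inflation--restriction: given a cocycle $\sigma$, it directly defines $c = \sqrt[4]{\alpha}\cdot r\bigl(\sqrt[4]{\alpha}\bigr)\cdot \xi\bigl(\sigma(r)\bigr)$ for a chosen $r \notin G_{K[\sqrt{-D}]}$, checks by hand that $c$ is independent of $r$ and Galois-invariant (hence in $K^\cross$, with $c^4 = N(\alpha)$ automatic), and then writes down the explicit inverse cocycle and verifies the two constructions are mutually inverse. You instead package the same information structurally: the anti-invariant identification $M \isom \mu_4$ over $F = K[\sqrt{-D}]$ and the Kummer class $\alpha$ are common to both, but you account for the failure of injectivity of $\Res$ via the five-term sequence, identify the inflation subgroup with $\mu_4(K)/N(\mu_4(F))$ and the image of $\Res$ with the kernel of a transgression, and conclude by the five lemma. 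This is a nice conceptual explanation of \emph{why} the extra invariant $c$ is needed at all (it records the inflation layer and kills the transgression obstruction), at the cost of having to treat the non-generic cases separately: note that $M^{G_F} = M[2]$ requires not only $i \notin K$ but also $D \notin (K^\cross)^2$ (if $D$ is a square, $M$ is the trivial module, $F = K[i]$ may still be a field, and $M^{G_F} = M$), whereas the paper's direct argument only needs $F$ to be a field.

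There is, however, a concrete error in your explicit inverse map: with the normalization $\sqrt[4]{\bar\alpha} = c/\sqrt[4]{\alpha}$, defining $\sigma(g)$ by $g\bigl(\sqrt[4]{\alpha}\bigr) = i^{\sigma(g)}\sqrt[4]{\bar\alpha}$ for $g \notin G_F$ does \emph{not} give a crossed homomorphism. For $g \in G_F$ and $h \notin G_F$ one computes
\[
  (gh)\bigl(\sqrt[4]{\alpha}\bigr) \;=\; g\Bigl(i^{\sigma(h)}\tfrac{c}{\sqrt[4]{\alpha}}\Bigr) \;=\; i^{\chi(g)\sigma(h) - \sigma(g)}\,\tfrac{c}{\sqrt[4]{\alpha}},
\]
so $\sigma(gh) = \psi(g)\sigma(h) - \sigma(g)$, which differs from the required $\sigma(g) + \psi(g)\sigma(h)$ by $2\sigma(g)$ (and the other mixed and doubly-nontrivial cases fail similarly). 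The ratio on the non-$G_F$ branch must be inverted: $i^{\sigma(g)} = \sqrt[4]{\bar\alpha}\big/g\bigl(\sqrt[4]{\alpha}\bigr) = c\big/\bigl(\sqrt[4]{\alpha}\,g(\sqrt[4]{\alpha})\bigr)$, which is the formula the paper uses; with that fix the cocycle identity does follow from $\chi(g)\psi(g) = -1$ off $G_F$ as you indicate. The fix propagates harmlessly through the rest (the transgression value, the compatibility with the filtrations, and the computation $\theta_0 = v = \sqrt{2c + \sqrt{2a+2c^2}}$, which is correct as you have it).
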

\begin{proof}
We have $T = K[\sqrt{D}]$, $T' = K[\sqrt{-D}]$. If $T' = K \cross K$, the conclusions follow easily from Kummer theory, so we assume that $T'$ is a field. As Galois modules over $T'$, we have an isomorphism $\xi : M \to \mu_4$, so by Kummer theory, $H^1(T', M) \isom T'^\cross / (T'^\cross)^4$. Applying this to the restriction of some cocycle $\sigma \in Z^1(K,M)$, we obtain that there exists $\alpha \in K^\cross$ such that, for all $g \in G_{T'}$,
\begin{equation} \label{eq:x_D4_Kum}
  \xi\big(\sigma(g)\big) = \frac{g(\!\sqrt[4]{\alpha})}{\sqrt[4]{\alpha}}.
\end{equation}
To make sense of this formula, it is necessary to fix a particular root $\sqrt[4]{\alpha} \in K^\sep$; the scaling $\sqrt[4]{\alpha} \mapsto i \sqrt[4]{\alpha}$ corresponds to offsetting $\sigma$ by the coboundary $g \mapsto g(x) - x$ where $x = \xi^{-1}(i) \in M$.

Now let $r \in G_K$ such that $r(\sqrt{-D}) = -\sqrt{-D}$, so $G_K = G_{T'} \sqcup r G_{T'}$. Note that $\xi (g x) = g \bigl(\xi x\bigr)$ for all $g \in G_{T'}$ and $x \in M$ (this is what it means for $\xi$ to be a morphism of $T'$-Galois modules), but
\begin{equation}
  \xi (r x) = r \bigl(\xi x\bigr)^{-1}.
\end{equation}
Set
\[
  c = \sqrt[4]{\alpha} \cdot r\bigl(\!\sqrt[4]{\alpha}\bigr) \cdot \xi\bigl(\sigma(r)\bigr).
\]
Observe that $c$ is independent of the choice of $r$, since if $r' = r \cdot g$ with $g \in G_{T[i]}$, then
\begin{align*}
  c' &= \sqrt[4]{\alpha} \cdot r'\bigl(\!\sqrt[4]{\alpha}\bigr) \cdot \xi\bigl(\sigma(r')\bigr) \\
  &= \sqrt[4]{\alpha} \cdot r\bigl(g(\!\sqrt[4]{\alpha})\bigr) \cdot \xi\bigl(\sigma(r) + r \sigma(g)\bigr) \\
  &= \sqrt[4]{\alpha} \cdot r\(\!\sqrt[4]{\alpha} \cdot \xi(\sigma(g)) \) \cdot \xi\(\sigma(r) + r \sigma(g)\) \\
  &= \sqrt[4]{\alpha} \cdot r\bigl(\!\sqrt[4]{\alpha}\bigr) \cdot \xi\bigl(\sigma(r)\bigr) \cdot \xi\bigl(\sigma(g) - \sigma(g)\bigr) \\
  &= c.
\end{align*}
We claim that $c \in K^\cross$. For this it is enough to show that $c$ is Galois invariant. If $g \in G_{T'}$, then
\begin{align*}
  g(c) &= g\bigl(\!\sqrt[4]{\alpha}\bigr) \cdot gr\bigl(\!\sqrt[4]{\alpha}\bigr) \cdot g\Bigl(\xi\bigl(\sigma(r)\bigr)\Bigr) \\
  &= \sqrt[4]{\alpha} \cdot \xi\bigl(\sigma(g)\bigr) \cdot gr\bigl(\!\sqrt[4]{\alpha}\bigr) \cdot \xi\bigl(g \sigma(r)\bigr) \\
  &= \sqrt[4]{\alpha} \cdot gr\bigl(\!\sqrt[4]{\alpha}\bigr) \cdot \xi\bigl(\sigma(g) + g\sigma(r)\bigr) \\
  &= \sqrt[4]{\alpha} \cdot gr\bigl(\!\sqrt[4]{\alpha}\bigr) \cdot \xi\bigl(\sigma(gr)\bigr) \\
  &= c,
\end{align*}
since changing $r$ to $gr \in r G_{T'}$ does not change $c$. As to invariance by $r G_{T'}$, we similarly have
\begin{align*}
  r^{-1}(c) &= r^{-1}\bigl(\!\sqrt[4]{\alpha}\bigr) \cdot \sqrt[4]{\alpha} \cdot r^{-1}\Bigl(\xi\bigl(\sigma(r)\bigr)\Bigr) \\
  &= \sqrt[4]{\alpha} \cdot r^{-1}\bigl(\!\sqrt[4]{\alpha}\bigr) \cdot \xi\big(-r^{-1}\sigma(r)\big) \\
  &= \sqrt[4]{\alpha} \cdot r^{-1}\bigl(\!\sqrt[4]{\alpha}\bigr) \cdot \xi\big(\sigma(r^{-1})\big) \\
  &= c.
\end{align*}
So $c \in K^\cross$. If $\alpha$ is scaled by $\beta^4$, then $c$ scales by $N(\beta)$. So we get a map
\[
  \Kum : H^1(K,M) \to \{(\alpha, c) \in K[\sqrt{-D}]^\cross \cross K^\cross : N(\alpha) = c^4\}/\{(\beta^4, N(\beta)) : \beta \in K[\sqrt{-D}]^\cross\}
\]
which one easily checks to be a group homomorphism. For the inverse, given the Kummer datum $(\alpha, c)$, define $\sigma$ by
\begin{equation} \label{eq:x_cocy_inv}
  \sigma(g) = \begin{cases}
    \xi^{-1}\left(\dfrac{g\bigl(\!\sqrt[4]{\alpha}\bigr)}{\sqrt[4]{\alpha}}\right), & g \in G_{T'} \\
    \xi^{-1}\left(\dfrac{c}{\sqrt[4]{\alpha} \, g\bigl(\!\sqrt[4]{\alpha}\bigr)}\right), & g \in G_K \setminus G_{T'}.
  \end{cases}
\end{equation}

It is a straightforward matter to check that $\sigma$ is a cocycle, that the choice of $4$th root can only affect $\sigma$ by a coboundary, and that this construction is inverse to $\Kum$ as defined above.

For the last claim, let a Kummer datum $(\alpha, c) = (a + b\sqrt{D}, c)$ correspond to a cocycle $\sigma \in Z^1(K,M)$. From \eqref{eq:x_cocy_inv}, we see that the $g$ with $\sigma(g) = 0$ fix the element
\[
  \theta_0 = \sqrt[4]{\alpha} + \frac{c}{\sqrt[4]{\alpha}}.
\]
More generally, if for $x \in M$ we set
\[
  \theta_a = \xi(x) \sqrt[4]{\alpha} + \frac{c}{\xi(x) \sqrt[4]{\alpha}},
\]
 we have that 
\begin{equation} \label{eq:x_theta_permute}
  g(\theta_a) = \theta_{\chi_D(g) x + \sigma(x)}
\end{equation}
where $\chi_D : G_K \to \mu_2$ is the quadratic character $\chi_D(g) = g(\sqrt{D})/\sqrt{D}$. Conversely, assuming the $\theta_a$ are distinct (which they are, except in the case that $\alpha = \pm c^2 \in K$, which can be avoided by rescaling $\alpha$ by a suitably generic $4$th power in $K[\sqrt{-D}]$), the relation \eqref{eq:x_theta_permute} determines $\sigma$. A brief calculation shows that the $\theta_a$ are the four roots of the quartic
\[
  (\theta^2 - 2c)^2 = 2a + 2c^2.
\]
Hence we recover $\sigma$ via the parametrization of Theorem \ref{thm:H1}, for we now see that $\psi_\sigma : G_K \to \Hol M \isom D_4$ is the map that records how $G_K$ permutes the four vertices of the square
\begin{equation} \label{eq:square1}
  \setlength{\unitlength}{1in}
  \begin{picture}(1.4,1.4)(-0.2,-0.2)%
    \put(0,0){\framebox(1,1){}}
    \put(0,0){\line(1,1){1}}
    \put(0,1){\line(1,-1){1}}
    \put(0,1){\makebox(0,0)[br]{$\theta_0 = \sqrt{2c + \sqrt{2a + 2c^2}}$}}
    \put(1,1){\makebox(0,0)[bl]{$\theta_1 = \sqrt{2c - \sqrt{2a + 2c^2}}$}}
    \put(1,0){\makebox(0,0)[tl]{$\theta_2 = -\sqrt{2c + \sqrt{2a + 2c^2}},$}}
    \put(0,0){\makebox(0,0)[tr]{$\theta_3 = -\sqrt{2c - \sqrt{2a + 2c^2}}$}}
  \end{picture}
\end{equation}
yielding the claimed correspondence.
\end{proof}
\begin{rem}
  There are several natural involutions on $H^1(K,M)$, which we trace through the correspondences of Theorems \ref{thm:H1} and \ref{thm:D4} to allay any confusion:
  \begin{enumerate}
    \item Negation in $H^1(K,M)$ corresponds to postcomposing $\psi_\sigma : G_K \to D_4$ with either of the two inner automorphisms of $D_4$ that are conjugation by a reflection. (There are four reflections in $D_4$, but due to the presence of a central element, they only define two automorphisms.) This sends $(\alpha, c)$ to $(\bar \alpha, c)$, which is equivalent to the expected inverse $(1/\alpha, 1/c)$ under \eqref{eq:D4_main} (the ratio of the Kummer data being $(N(\alpha), c^2) = (c^4, N(c))$).
    \item Passing to the mirror field (compare Example \ref{ex:D4}) corresponds to postcomposing $\psi_\sigma : G_K \to D_4$ with an outer automorphism of $D_4$ that rotates the square \ref{eq:square1} by $45^\circ$. In other words, we ask how $G_K$ permutes the four numbers $\theta_k + \theta_{k+1}$:
    \begin{equation} \label{eq:square2}
      \setlength{\unitlength}{1in}
      \begin{picture}(1.8,1.8)(-0.4,-0.4)%
        \put(0,0){\framebox(1,1){}}
        \put(0,0){\line(1,1){1}}
        \put(0,1){\line(1,-1){1}}
        \put(0,1){\makebox(0,0)[br]{$\theta_0$}}
        \put(1,1){\makebox(0,0)[bl]{$\theta_1$}}
        \put(1,0){\makebox(0,0)[tl]{$\theta_2$}}
        \put(0,0){\makebox(0,0)[tr]{$\theta_3$}}
        \thicklines
        \put(0.5,-0.21){\line(0,1){1.42}}
        \put(-0.21,0.5){\line(1,0){1.42}}
        \put(-0.21,0.5){\line(1,1){0.71}}
        \put(-0.21,0.5){\line(1,-1){0.71}}
        \put(-0.21,0.5){\line(1,1){0.71}}
        \put(-0.21,0.5){\line(1,-1){0.71}}
        \put(1.21,0.5){\line(-1,1){0.71}}
        \put(1.21,0.5){\line(-1,-1){0.71}}
        \put(0.5,1.21) {\makebox(0,0)[b]{$\theta_0 + \theta_1 = \sqrt{4 c + 2\sqrt{-2 a + 2 c^2}}$}}
        \put(1.26,0.5) {\makebox(0,0)[l]{$\theta_1 + \theta_2 = \sqrt{4 c - 2\sqrt{-2 a + 2 c^2}}$}}
        \put(0.5,-0.21){\makebox(0,0)[t]{$\theta_2 + \theta_3 = -\sqrt{4 c + 2\sqrt{-2 a + 2 c^2}}$\rlap{.}}}
        \put(-0.26,0.5){\makebox(0,0)[r]{$\theta_3 + \theta_0 = -\sqrt{4 c - 2\sqrt{-2 a + 2 c^2}}$}}
      \end{picture}
    \end{equation}
    Here it is understood that there is a choice of signs for the square roots that is consistent with the way we labeled the $\theta_i$ earlier.
    From this we see that the mirror field $L'$ is generated by
    \[
      \sqrt{4c + 2 \sqrt{-2a + 2c^2}} = \sqrt{2c' + \sqrt{2a' + 2c'^2}}
    \]
    where $c' = 2c$, $a' = -4a$, $b' = -4b$. Accordingly, the mirror field relation is not an automorphism of $H^1(K,M)$, but rather a translation by the special element $\sigma_{\mathrm{mir}} \in H^1(K,M)$ whose Kummer datum is $(-4,2)$, and whose corresponding $D_4$-quartic algebra is the split algebra $K[\sqrt{D}] \cross K[\sqrt{D}]$.
    
    There is a sign ambiguity on $b'$ that corresponds to negating the coclass of the mirror field. That we have made a consistent choice of sign appears more clearly in the following perspective: if
    \[
      \theta_k = i^k \sqrt[4]{\alpha} + \frac{c}{i^k \sqrt[4]{\alpha}},
    \]
    then
    \begin{align*}
      \theta_k + \theta_{k+1} &= i^k (1 + i) \sqrt[4]{\alpha} + \frac{c(1 - i)}{i^k \sqrt[4]{\alpha}} \\
      &= i^k \sqrt[4]{-4\alpha} + \frac{2c}{i^k \sqrt[4]{-4\alpha}}
    \end{align*}
    generates a field with Kummer datum $(-4\alpha, 2c)$.
  \end{enumerate}
\end{rem}

\begin{qn}
  Is there a suitable analogue of the theorems in this section for every Galois module $M$? Specifically, if the Tate dual $M'$ corresponds to an $(\Aut M)$-torsor $T'$ over a field $K$ of characteristic not dividing $|M|$, we seek an integer $k \geq 0$ and subgroups $H \subseteq G \subseteq (T'^\cross)^k$, such that there is a natural identification
  \[
    H^1(K,M) \isom G/H.
  \]
\end{qn}

\section{The local Tate pairing}
\label{sec:tate}

Assume now that $ K $ is a \emph{local} field, that is, a finite extension of $\QQ_p$ or $\FF_p\laurent{t}$ for some prime $p$. Fundamental to class field theory is the \emph{Tate pairing}, a perfect pairing given by the cup product
\[
\langle \, , \, \rangle_T : H^1(K, M) \cross H^1(K, M') \to H^2(K, \mu_m) \isom \mu_m.
\]
In many cases this pairing can be described explicitly. For instance, if $M \cong C_m $ has \emph{trivial} $ G_K $-action, then $ M' \cong \mu_m $, and we have a Tate pairing
\[
\langle \, , \, \rangle_T : H^1(K, C_m) \cross H^1(K, \mu_m) \to \mu_m
\]
Now $H^1(K, C_m) \cong \Hom(K, C_m)$ parametrizes $C_m$-torsors $L$, while by Kummer theory, $H^1(K, \mu_m) \cong K^\cross / (K^\cross)^m$. The Tate pairing in this case is none other than the \emph{Artin symbol} $\phi_L : K^\cross \to \Gal(L/K) \to \mu_m$ whose kernel is the norm group $N_{L/K}(L^\cross)$ (see Neukirch \cite{NeukirchCoho}, Prop.~7.2.13). If, in addition, $\mu_m \subseteq K$, then $H^1(K, C_m)$ is also isomorphic to $K^\cross/\( K^\cross\) ^m$, and the Tate pairing is an alternating pairing
\[
\langle \, , \, \rangle : K^\cross/\( K^\cross\) ^m \cross K^\cross/\( K^\cross\) ^m \to \mu_m
\]
classically called the \emph{Hilbert symbol} (or \emph{Hilbert pairing}). It is defined in terms of the Artin symbol by
\begin{equation}
  \<a, b\> = \phi_{K[\!\sqrt[m]{b}]}(a).
\end{equation}
In particular, $\<a, b\> = 1$ if and only if $a$ is the norm of an element of $K[\!\sqrt[m]{b}]$. This can also be described in terms of the splitting of an appropriate Severi-Brauer variety; for instance, if $m = 2$, we have $ \<a,b\> = 1 $ exactly when the conic
\[
ax^2 + by^2 = z^2
\]
has a $K$-rational point. See also Serre (\cite{SerreLF}, \textsection\textsection XIV.1--2). (All identifications between pairings here are up to sign; the signs are not consistent in the literature.) To finish this paper, we find additional cases where the Tate pairing can be expressed in terms of the more explicit Hilbert pairing.

We extend the Hilbert pairing to \'etale algebras in the obvious way: if $L = K_1 \cross \cdots \cross K_s$ is a product of finite separable extensions, then
\[
\<(a_1;\ldots;a_s), (b_1;\ldots;b_s)\>_L := \<a_1, b_1\>_{K_1} \cdot \cdots \cdot \<a_s, b_s\>_{K_s}.
\]
Note that if $a$ is a norm from $L[\!\sqrt[m]{b}]$ to $L$, then $\<a, b\>_L = 1$, but the converse no longer holds. We then have the following:

\begin{prop}[\textbf{The Tate pairing for Galois modules of order 3}]\label{prop:Tate_pairing} Let $ K $ be a local field, $\ch K \neq 3$. Let $ M $ and $M'$ be a dual pair of Galois modules of order $3$, and let $T$, $T'$ be the corresponding $C_2$-torsors in the bijection of Proposition \ref{prop:Gal_mod}\ref{it:Gal_mod_T}. Then the Tate pairing
  \[
  \<\bullet,\bullet\> : H^1(K, M) \cross H^1(K, M') \to H^2(K, \mu_m) \isom C_m
  \]
  is the restriction of the Hilbert pairing on $ E := T[\mu_3] $, which naturally contains both $T$ and $T'$.
\end{prop}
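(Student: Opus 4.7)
The plan is to restrict both coclasses from $K$ up to the compositum $E = T[\mu_3]$, over which $M$, $M'$, and $\mu_3$ all acquire trivial $G_E$-action and are therefore isomorphic as $G_E$-modules. The Tate pairing over $E$ is then the classical Hilbert symbol, and we descend using that $[E:K] \leq 4$ is coprime to $3$.

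First I would check that $\Res_{E/K}\colon H^2(K, \mu_3) \to H^2(E, \mu_3)$ is injective. Both $T \subseteq E$ (by definition) and $T' \subseteq E$ (since if $T = K[\sqrt{D}]$ then $T' = K[\sqrt{-3D}] \subseteq K[\sqrt{D}, \sqrt{-3}] = E$), so $[E:K]$ divides $4$, hence is coprime to $3$. As $\Cor \circ \Res$ is multiplication by $[E:K]$, it is invertible on the $3$-torsion $H^2(K, \mu_3)$, so $\Res$ is injective; thus it suffices to verify the claim after restriction to $E$.

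Next I would fix a $G_E$-isomorphism $\iota \colon M \to \mu_3$ and let $\iota' \colon M' \to \mu_3$ be the unique $G_E$-isomorphism for which the Tate-duality evaluation $M \otimes M' \to \mu_3$ corresponds to the product pairing $\mu_3 \otimes \mu_3 \to \mu_3$. Theorem \ref{thm:H1}\ref{it:Res} together with the proof of Proposition \ref{prop:Kummer_cubic} identifies the restriction
\[
  \Res_{E/K}\colon H^1(K, M) \isom T'^{N=1}/(T'^{N=1})^3 \to H^1(E, M) \isom H^1(E, \mu_3) \isom E^\cross/(E^\cross)^3
\]
with the inclusion $\delta \mapsto \delta$ (viewing $\delta \in T' \subseteq E$), and likewise $\delta' \mapsto \delta'$ for $H^1(K, M')$. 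This uses only that the Kummer parametrization is constructed by first restricting to $T'$ (where $M \isom \mu_3$ already), so further restriction to $E$ simply extends scalars on the Kummer element.

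Functoriality of the cup product under restriction then gives $\Res(\sigma \cup \sigma') = \Res(\sigma) \cup \Res(\sigma')$, and by the compatibility of identifications chosen above the right-hand side is precisely the Hilbert pairing $\<\delta, \delta'\>_E$. Combined with injectivity of $\Res$ on $H^2(K, \mu_3)$, this yields the claim. The main obstacle is the bookkeeping in the middle step: matching the Kummer parametrization of Proposition \ref{prop:Kummer_cubic} (which is set up over $T'$, not over $E$) with a consistent pair of identifications $\iota, \iota'$ over $E$, and verifying that the duality pairing $M \otimes M' \to \mu_3$ indeed becomes the product pairing on $\mu_3$ under them; once those are nailed down, functoriality of cup product does the rest.
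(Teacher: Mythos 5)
Your proposal is correct and follows essentially the same route as the paper: base-change to $E = T[\mu_3]$, where $M$, $M'$, and $\mu_3$ all become trivial so that the cup product is the Hilbert symbol by definition, then descend using that $[E:K]$ is coprime to $3$ (you phrase this as injectivity of $\Res$ on $H^2(K,\mu_3)$, the paper as $\Cor \circ \Res$ splitting the restriction on $H^1$ --- the same coprimality fact). The bookkeeping you flag at the end, matching the two Kummer identifications over $E$ with the duality pairing, is likewise left implicit in the paper's proof.
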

\begin{proof}
  Note that $E$ is an extension that splits both $M$ and $M'$. Since $[E : K] = 4$ is coprime to $\size{M} = 3$, restriction and corestriction embed $H^1(K,M)$ and $H^1(K, M')$ as direct summands of $H^1(E, C_3)$. This allows us to reduce to the case where both $M$ and $M'$ are trivial, and then the conclusion follows from the definition of the Hilbert pairing.
\end{proof}

\begin{rem}
  A similar result holds for modules $M$ of underlying group $C_p$ for any prime $p$.
\end{rem}

\begin{thm}[\textbf{The Tate pairing for Galois modules of type $C_2 \cross C_2$}] \label{thm:Tate_quartic} Let $ K $ be a local field, $\ch K \neq 2$. Let $M$ be a Galois module whose underlying group is $C_2 \cross C_2$ and whose Galois-module structure corresponds to an \'etale algebra $R$ under the bijection of Proposition \ref{prop:Gal_mod}\ref{it:Gal_mod_L-}. Then under the parametrization of Proposition \ref{prop:Kummer_quartic}, the Tate pairing
  \[
  \<\bullet,\bullet\> : H^1(K, M) \cross H^1(K, M) \to H^2(K, \mu_2) \isom \mu_2
  \]
  is the restriction of the Hilbert pairing on $R$.
\end{thm}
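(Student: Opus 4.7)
The plan is to exploit the natural decomposition of $G_K$-modules
\[
  \Ind_{G_R}^{G_K}(\mu_2) \;\cong\; M \oplus \mu_2,
\]
which exists because $|M^-| = 3$ is odd in $\FF_2$: the all-ones vector spans a $G_K$-invariant complement to the sum-zero hyperplane, and the hyperplane is identified with $M$ via $(x_a)_{a \in M^-} \mapsto \sum_a x_a \cdot a$. By Shapiro's lemma, this realizes $H^1(K, M)$ as a direct summand of $H^1(R, \mu_2) = R^\times/(R^\times)^2$; the projection to the $\mu_2$ summand corresponds, under Shapiro, to the norm map $N_{R/K}$, so the summand is precisely the kernel of $N_{R/K}$ modulo squares, which upon rescaling yields the parametrization $R^{N=1}/(R^{N=1})^2$ of Proposition \ref{prop:Kummer_quartic}.

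Second, I would observe that the Weil pairing $M \otimes M \to \mu_2$ --- the unique $\Aut M = S_3$-invariant alternating form on $M$ --- is the restriction to $M \subset \Ind_{G_R}^{G_K}\mu_2$ of the standard inner product
\[
  \Ind_{G_R}^{G_K}\mu_2 \otimes \Ind_{G_R}^{G_K}\mu_2 \to \mu_2, \qquad \bigl((x_a),(y_a)\bigr) \mapsto \sum_a x_a y_a,
\]
which is $G_K$-equivariant. This inner product factors as $\Ind \otimes \Ind \twoheadrightarrow \Ind \to \mu_2$, with the first arrow projecting onto the diagonal orbit in $\Ind \otimes \Ind = \FF_2[M^- \times M^-]$ and the second being the sum map. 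A standard compatibility of cup products with induced modules --- essentially the projection formula for Shapiro's lemma and corestriction --- then identifies
\[
  H^1(K, \Ind) \otimes H^1(K, \Ind) \to H^2(K, \Ind \otimes \Ind) \to H^2(K, \mu_2)
\]
with
\[
  H^1(R, \mu_2) \otimes H^1(R, \mu_2) \xrightarrow{\cup_R} H^2(R, \mu_2) \xrightarrow{\Cor_{R/K}} H^2(K, \mu_2).
\]

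Combining: for $\sigma_1, \sigma_2 \in H^1(K, M)$ with Kummer data $\delta_1, \delta_2 \in R^{N=1}/(R^{N=1})^2$, the naturality of cup product along $M \hookrightarrow \Ind$ gives $\langle \sigma_1, \sigma_2 \rangle_T = \Cor_{R/K}(\delta_1 \cup_R \delta_2)$. On each field factor $K_i$ of $R$, the cup product $\delta_{1,i} \cup \delta_{2,i} \in H^2(K_i, \mu_2) = \Br(K_i)[2]$ is the Brauer class of the quaternion algebra $(\delta_{1,i}, \delta_{2,i})_{K_i}$, whose local invariant is the local Hilbert symbol. Since $\inv_K \circ \Cor_{K_i/K} = \inv_{K_i}$ for corestriction on Brauer groups of local fields, we conclude
\[
  \inv_K \langle \sigma_1, \sigma_2 \rangle_T = \sum_i \langle \delta_{1,i}, \delta_{2,i}\rangle_{K_i} = \langle \delta_1, \delta_2 \rangle_R,
\]
as claimed. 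The main obstacle is the compatibility in the second paragraph --- verifying that cup products in $H^*(K, \Ind)$ contracted by the inner product agree, under Shapiro's lemma, with cup products in $H^*(R, \mu_2)$ followed by corestriction. This is a general feature of cohomology of induced modules, but requires careful bookkeeping at the level of cocycles; once established, everything else follows from standard local class field theory.
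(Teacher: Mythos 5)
Your proposal is correct and follows essentially the same route as the paper: both decompose the invariant alternating form $\epsilon = \sum_{\chi \neq 0} \chi \otimes \chi$ on $M$ into $G_K$-orbit contributions indexed by the field factors of $R$, and use $\inv_K \circ \Cor_{R_i/K} = \inv_{R_i}$ to match each contribution with a local Hilbert symbol. The ``careful bookkeeping'' you defer --- the compatibility of cup products with Shapiro's lemma and corestriction --- is precisely what the paper isolates and proves as Lemma \ref{lem:cor_res} (the identity $\Cor_H^G(f_* \Res_H^G \sigma) = \tilde{f}_* \sigma$, established by dimension shifting), applied with $f = \chi_i \otimes \chi_i$.
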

\begin{proof}
  Note that $M$ has a unique, and therefore Galois-invariant, nonzero alternating bilinear form
  \begin{align*}
    \epsilon : M \cross M &\to \FF_2 \\
    (x,y) &\mapsto \sum_{\substack{\chi \in \Hom(M, \FF_2) \\ \chi \neq 0}} \chi(x) \chi(y) \\
    &= \begin{cases}
      0 & \text{if $x = 0$ or $y = 0$ or $x = y$} \\
      1 & \text{otherwise,}
    \end{cases}
  \end{align*}
  which allows us to identify $M$ with its Tate dual.
  
  Let $ R_1,\ldots, R_\ell $ be the field factors of $ R $; each $ R_i $ corresponds to an orbit $ G_K \psi_i $ on $ M $. Denote by $\chi_i$ the corresponding map $\chi_i(x) = \epsilon(x, \psi) : M \to \mu_2$. Let $ \sigma, \tau \in H^1(K, M) $, and let
  \[
    \Kum : H^1(K,M) \to R^{N=1}/(R^{N=1})^2
  \]
  be the Kummer map from Proposition \ref{prop:Kummer_quartic}. We have
  \begin{align*}
    \<\Kum(\sigma), \Kum(\tau)\>_{R} &= \prod_{R_i} \<\Kum(\sigma), \Kum(\tau)\>_{R_i} \\
    &= \prod_{R_i} \inv_{R_i} \big(\chi_{i*} \Res^K_{R_i}\sigma \cup \chi_{i*} \Res^{K}_{R_i}\tau\big),
  \end{align*}
  where $\inv_F : H^2(F, \mu_2) \to \mu_2$ is the invariant map from class field theory. Using the standard fact $ \inv_{R_i} = \inv_K \circ \Cor^{K}_{R_i}$, we have
  \begin{align*}
    \<\Kum(\sigma), \Kum(\tau)\>_{R} &= \inv_K \sum_{R_i} \Cor^{K}_{R_i} \big(\chi_{i*} \Res^K_{R_i}\sigma \cup \chi_{i*} \Res^{K}_{R_i}\tau\big) \\
    &= \inv_K \sum_{R_i} \Cor^{K}_{R_i} (\chi_i \tensor \chi_i )_* \Res^K_{R_i}(\sigma \cup \tau).
  \end{align*}
  where $ \chi_i \tensor \chi_i : M \tensor M \to \mu_2 $. We now apply the following lemma, which generalizes results seen in the literature (see \cite[\textsection I.2.5]{SerreGC}).
  \begin{lem}\label{lem:cor_res}
    Let $H \subseteq G$ be an open subgroup of finite index in a profinite group $G$. Let $X$ and $Y$ be $G$-modules, and let $f : X \to Y$ be a map that is $H$-linear (but not necessarily $G$-linear). Denote by $\tilde f$ the $G$-linear map
    \begin{align*}
      \tilde{f} : X &\to Y \\
      \tilde{f}(x) &= \sum_{gH \in G/H} g f g^{-1}(x).
    \end{align*}
    Let $\sigma \in H^n(G, X)$. Then
    \[
      \Cor_H^G(f_* \Res_H^G \sigma) = \tilde{f}_* \sigma.
    \]
  \end{lem}
  \begin{proof}
    First note that $\tilde f$ is well-defined: since $f$ is $H$-linear, we have $hfh^{-1} = f$ for all $h \in H$, so $gfg^{-1}$ depends only on the coset $gH \in G/H$.
    
    We will prove this lemma using dimension shifting (see \cite[\textsection I.3]{NeukirchCoho}), which we explain here.
    
    First, we can assume that $G$ and $H$ are finite, since the $G$-module structures on $X$ and $Y$ as well as any particular cocycle $x$ factor through some finite quotient of $G$. This avoids topological complications in the work that follows.
    
    We use induction on $n$. The base case $n = 0$ can be calculated explicitly: Here $\sigma = x \in X$ is fixed under $G$, and
    \begin{align*}
      \Cor_H^G(f_* \Res_H^G \sigma) &= \Cor_H^G\big( f(x)\big) \\
      &= \sum_{gH \in G/H} gf(x) \\
      &= \sum_{gH \in G/H} gfg^{-1}(x) \\
      &= \tilde f(x)
    \end{align*}
    where the penultimate step uses that $x$ is fixed under $G$.
    
    Now assume the lemma is known for $H^{n-1}$. Passing to the induced module, we get a commutative diagram of $H$-modules with exact rows
    \[
      \xymatrix{
        0 \ar[r] & X \ar[r]\ar[d]^f & \Ind_G X \ar[r]\ar[d]^{\Ind_G f} & X_1 \ar[r] \ar[d]^{f_1} & 0 \\
        0 \ar[r] & Y \ar[r] & \Ind_G Y \ar[r] & Y_1 \ar[r] & 0\rlap{,}
      }
    \]
    (explicitly, $\Ind_G X = X \tensor_\ZZ \ZZ[G]$ and $X_1 = X \tensor_\ZZ (\ZZ[G]/\ZZ)$). This yields a commutative diagram in cohomology, with exact rows:
    \begin{equation} \label{eq:coho_cd}
      \xymatrix{
        H^{n-1}(G, X_1) \ar[r] \ar[d]^{\Res^{G}_H} & H^n(G, X) \ar[r]\ar[d]^{\Res^{G}_H} & H^n(G, \Ind_G X)\ar[d]^{\Res^{G}_H} \\
        H^{n-1}(H, X_1) \ar[r] \ar[d]^{f_{1*}} & H^n(H, X) \ar[r]\ar[d]^{f_*} & H^n(H, \Ind_G X)\ar[d]^{\Ind_G f} \\
        H^{n-1}(H, Y_1) \ar[r] \ar[d]^{\Cor^{G}_H} & H^n(H, Y) \ar[r]\ar[d]^{\Cor^{G}_H} & H^n(H, \Ind_G Y)\ar[d]^{\Cor^{G}_H} \\
        H^{n-1}(G, Y_1) \ar[r] & H^n(G, Y) \ar[r] & H^n(G, \Ind_G Y)
      }
    \end{equation}
    Because an induced module is cohomologically trivial, the right-hand column of this diagram is all zero. Also, by the induction hypothesis, the composition of the left-hand column is $(\widetilde{f_1})_* = (\tilde{f})_{1*}$. It follows that the composition of the middle column is $\tilde{f}_*$, as desired.
  \end{proof}
  Applying this lemma with $ G = G_K $, $ H = G_{R_i} $, and $ f = \chi_i \tensor \chi_i : M \tensor M \to \mu_2 $, we get
  \begin{align*}
    \<\Kum(\sigma), \Kum(\tau)\>_{R} &= \inv_K \sum_{R_i} \Big(\sum_{g \in G_K/G_{R_i}} (\chi_i \circ g^{-1} \tensor \chi_i \circ g^{-1}) \Big)_* (\sigma \cup \tau), \\
    &= \inv_K \sum_{R_i} \Big(\sum_{g \in G_K/G_{R_i}} \big(\chi_{i,g} \tensor \chi_{i,g}\big)\Big)_* (\sigma \cup \tau), \\
  \end{align*}
  where $ \chi_{i,g}(x) = \chi(g^{-1}(x)) $ are given by the natural action. Now the outer sum runs over all $ G_K $-orbits of $ M $ while the inner sum runs over the elements of each orbit, so we get
  \begin{align*}
    \<\Kum(\sigma), \Kum(\tau)\>_{R} &= \inv_K \Big(\sum_{\substack{\chi : M \to C_2 \\ \chi \neq 0}} \chi \tensor \chi) \Big)_* (\sigma \cup \tau) \\
    &= \inv_K \epsilon_* (\sigma \cup \tau) \\
    &= \<\sigma, \tau\>_{\text{Tate}},
  \end{align*}
  as desired.
\end{proof}

\begin{qn}
  Can the Tate pairing for modules with underlying group $C_4$ be described by Hilbert symbols in the Kummer data $(\alpha, c)$ of the parametrization of Theorem \ref{thm:D4}?
\end{qn}

\bibliography{../Master}
\bibliographystyle{alpha}

\end{document}